\newtheorem{theorem}{Theorem}[section]
\newtheorem{lemma}[theorem]{Lemma}
\newtheorem{proposition}[theorem]{Proposition}
\newtheorem{corollary}[theorem]{Corollary}
\newtheorem{conjecture}[theorem]{Conjecture}
\theoremstyle{definition}
\theoremstyle{remark}
\newtheorem{remark}[theorem]{Remark}
\newcommand{\Rm}{\mathrm{Rm}}
\newcommand{\Rc}{\mathrm{Rc}}
\newcommand{\diff}[1]{\, \mathrm{d}#1}
\newcommand{\Hess}{\mbox{Hess}}
\newcommand{\tr}{\mathrm{tr}_g}
\newcommand{\ts}[1]{\mathrm{#1}}
\title{Rigidity of Gradient Shrinking Ricci Solitons with a Vanishing Bach-like Tensor and Related Variational Formulas}
\author{James Siene\\\\Department of Mathematics, Lehigh University}
\date{}
\begin{document}
\maketitle
\begin{abstract}
The classical Bach tensor in four dimensions can be expressed as a linear combination of two independent, symmetric, divergence-free, quadratic-in-curvature tensors $\ts U$ and $\ts V$. Several classification results for gradient-shrinking Ricci solitons have been obtained under the assumption that the Bach tensor vanishes. We define a Bach-like tensor to be any other linear combination of $\ts U$ and $\ts V$. We prove that within a certain cone of parameters $\mathcal{C}$, the vanishing of a Bach-like tensor forces a four-dimensional complete gradient-shrinking Ricci soliton to be either Einstein or isometric to the Gaussian soliton, extending the results of Cao--Chen \cite{caochen:2013}. The special case where $\mathrm{U}=0$ forces $f_{\min}\in\{0,1,2\}$, with rigidity holding when $f_{\min}=0,2$. The remaining case $f_{\min}=1$ is the central open problem, with $\mathbb{R}^2\times \mathbb{S}^2(\sqrt 2)$ as the conjectured exceptional geometry. Finally, we show that Bach-like tensors arise as Euler--Lagrange equations of a two-parameter family of quadratic curvature functionals and compute the corresponding first and second variation formulas.
\end{abstract}

\begin{small}\tableofcontents\end{small}

\section{Introduction and Main Results}

\par A \emph{gradient Ricci soliton} is a Riemannian manifold \((M^n, g)\) together with a smooth potential function \(f: M \to \mathbb{R}\) satisfying
\begin{equation}\label{eq:soliton}
\ts{Rc} + \nabla^2 f \;=\; \rho\,g,
\end{equation}
for $\rho$ a constant. The soliton is said to be \emph{shrinking} if $\rho > 0$, and it is common to scale $f$ so that $\rho = \frac{1}{2}$ for convenience. We normalize $f$ throughout so that
\begin{equation}\label{eq:normalization}
    \ts R + |\nabla f|^2 = f,
\end{equation}
following Hamilton \cite{hamilton:1988}. Such solitons arise as self-similar solutions of Hamilton's Ricci flow, as models for type I singularities. Unless otherwise stated, our work will be in dimension $n=4$.

\par
The classical Bach tensor was introduced by R. Bach \cite{bach:1921} to study conformal relativity. It is defined in dimension 4 by:
\begin{equation}\label{eq.bachtensor}
    \ts B_{ij} = \nabla^k\nabla^l \ts W_{ijkl} + \frac{1}{2} \ts R^{kl}\ts W_{ikjl},
\end{equation}
where $\ts W_{ijkl}$ denotes the Weyl curvature and $\ts R_{ij}$ the Ricci curvature. It is well-known that $\ts B_{ij}$ is conformally invariant, symmetric, trace-free, divergence-free, and quadratic in the Riemann curvature tensor. As seen in \cite{bergman:2004} there are exactly two independent symmetric $2$-tensors that are divergence-free and quadratic in curvature in four dimensions:
\begin{equation}\label{eq.utensor}
    \ts U_{ij} = 2\ts R_{ipjq}\ts R^{pq} + \Delta \ts R_{ij} - \frac{1}{2}|\ts{Rc}|^2 g_{ij} - \ts R\, \ts R_{ij} - \frac{1}{2}\Delta \ts R \, g_{ij} + \frac{1}{4}\ts R^2 \, g_{ij},
\end{equation}
\begin{equation}\label{eq.vtensor}
    \ts V_{ij} = -\nabla_i \nabla_j \ts R + \Delta \ts R \, g_{ij} + \ts R\, \ts R_{ij} - \frac{1}{4}\ts R^2\, g_{ij}.
\end{equation}
One can express the Bach tensor as a combination of $\ts U$ and $\ts V$. In dimension 4:
\begin{equation}\label{eq.bachliketensor}
    \ts B_{ij} = \frac{1}{2} \ts U_{ij} + \frac{1}{6}\ts V_{ij}.
\end{equation}
\par We define a \emph{Bach-like tensor} to be of the form:
\begin{equation}
    \mathfrak{B}_{ij} = \alpha \,\ts U_{ij} + \beta \,\ts V_{ij}
\end{equation}
for constants $(\alpha,\beta)\in\mathbb{R}^2\setminus\{\mathbf{0}\}$. If $\alpha/\beta=3$ then the Bach-like tensor is simply a rescaled Bach tensor. H.-D.\ Cao and Q.\ Chen showed in \cite{caochen:2013} that if the Bach tensor vanishes on a complete gradient-shrinking Ricci soliton, then it must be rigid. Once we move beyond the Bach tensor itself, we can show that Bach-like flatness still implies rigidity within a certain parameter cone, but without the index-heavy computations typical of the Bach-flat case. The principal results of this paper are as follows.

\begin{theorem}[\textbf{Vanishing V-tensor}]\label{theorem.vanishingV}
    Let $(M^4,g,f)$ be a complete gradient-shrinking Ricci soliton satisfying \eqref{eq:soliton} and \eqref{eq:normalization}. Then $\ts V_{ij} \equiv 0$ if and only if $M^4$ is either Einstein or isometric to the Gaussian shrinking soliton. In particular, $f_{\min}\in\{0,2\}$ correspondingly.
\end{theorem}

\begin{theorem}[\textbf{Vanishing U-tensor}]\label{theorem.vanishingU}
    Let $(M^4,g,f)$ be a complete gradient-shrinking Ricci soliton satisfying \eqref{eq:soliton} and \eqref{eq:normalization}. If $\ts U_{ij} \equiv 0$ then $f_{\min}\in\{0,1,2\}$. If $f_{\min}=0$ then $M^4$ is isometric to the Gaussian shrinking soliton, and if $f_{\min}=2$ then $M^4$ is Einstein. Conversely, every Einstein or Gaussian shrinking soliton satisfies $\ts U_{ij}\equiv 0$.
\end{theorem}

The case $f_{\min}=1$ in Theorem~\ref{theorem.vanishingU} is not resolved by the methods of this paper. At the minimum point $p$ of $f$, the tensor $\ts U$ vanishes independently of any global assumption when $\ts R(p)=1$, so the local analysis yields no information. By contrast, $\ts V$ cannot vanish at $p$ when $f_{\min}=1$. We state the following as a conjecture.

\begin{conjecture}\label{conj.cylinder}
    Let $(M^4,g,f)$ be a complete gradient-shrinking Ricci soliton satisfying \eqref{eq:soliton} and \eqref{eq:normalization}. If $\ts U\equiv 0$ and $f_{\min}=1$, then $M^4$ is isometric to $\mathbb{R}^2\times S^2(\sqrt{2})$.
\end{conjecture}

The candidate space $\mathbb{R}^2\times S^2(\sqrt{2})$ is verified to satisfy the hypotheses. For the shrinking soliton normalization $\rho=\tfrac{1}{2}$, the soliton equation in the $S^2$ directions forces radius $r=\sqrt{2}$, giving $\ts R=1$ and $f_{\min}=1$ via \eqref{eq:normalization}. Indexing by $0,1$ in the $\mathbb{R}^2$-directions and $a,b$ on $S^2(\sqrt{2})$, a direct computation gives $\ts U_{00} = \ts U_{0a} = \ts U_{ab} = 0$, so $\ts U\equiv 0$. Moreover, $\ts V_{00} = -\tfrac{1}{4}$ and $\ts V_{ab} = \tfrac{1}{4}g_{ab}$, so $\ts V\not\equiv 0$, confirming the space is neither Einstein nor Gaussian. This computation was suggested to us by H.-D.\ Cao.

\begin{theorem}[\textbf{Bach-like Rigidity}]\label{theorem.vanishingBachLike}
    Let $(M^4,g,f)$ be a complete gradient-shrinking Ricci soliton satisfying \eqref{eq:soliton} and \eqref{eq:normalization}. Assume $(\alpha,\beta)\in\mathcal{C}$ where
    \[\mathcal{C} = \{(\alpha,\beta)\,\,|\,\, \alpha\geq 0 \,\,\mathrm{and}\,\, \beta>\alpha/3\}\cup\{(\alpha,\beta)\,\,|\,\, \alpha\leq 0\,\,\mathrm{and}\,\, \beta<\alpha/3\},\]
    and assume $3\beta\neq\alpha$. If $\mathfrak{B}_{ij} = \alpha\,\ts U_{ij} + \beta\, \ts V_{ij} = 0$, then $M^4$ is either Einstein or isometric to the Gaussian shrinking soliton. Conversely, every Einstein or Gaussian shrinking soliton satisfies $\mathfrak{B}_{ij}=0$. If $3\beta=\alpha$, the rigidity results of Cao--Chen \cite{caochen:2013} apply.
\end{theorem}

We approach the proof of Theorem~\ref{theorem.vanishingBachLike} by primarily using weighted integral identities from the $\ts V_{ij}$ tensor inside the cone $\mathcal{C}$. This result covers the entire $(\alpha, \beta)$ plane aside from the Bach line, the $\beta=0$ axis, and the origin; a complete treatment of the remaining cases is contingent on Conjecture~\ref{conj.cylinder} and is deferred to future work.

\section{Integral Identities}
To proceed, we rely on a number of integral identities implied by $\Delta \ts R = 0$. The connection between harmonic scalar curvature and Bach-like tensors is rooted in the \emph{traces} of $\ts U$ and $\ts V$.

\begin{lemma}\label{lemma.tracesUV}
    Let $\ts U_{ij}$ and $\ts V_{ij}$ be given by \eqref{eq.utensor} and \eqref{eq.vtensor} respectively. Then
    \begin{align*}
        \mathrm{tr}(\ts U_{ij}) &= -\Delta \ts R,\\
        \mathrm{tr}(\ts V_{ij}) &= 3\Delta \ts R.
    \end{align*}
\end{lemma}
So if $(\alpha,\beta)\in\mathcal{C}$, then $\mathfrak{B}_{ij} = 0$ implies $\Delta \ts R = 0.$ We now derive the integral identities for both the unweighted and weighted volume elements.
\begin{lemma}[Integral Identities] \label{lemma.integralIdentities}
        Let $(M^4, g, f)$ be a complete gradient-shrinking Ricci soliton satisfying \eqref{eq:soliton} and \eqref{eq:normalization}, and assume that $\Delta \ts R =0$. Let $r$ be a regular value of $f$ and $\Omega_r = \{ x \in M \, | \, f(x) \leq r \}$. Then:
        \begin{enumerate}
            \item[1.)] $\displaystyle \int_{\partial \Omega_r} \frac{1}{|\nabla f|}\langle \nabla \ts R, \nabla f \rangle \diff{S} = 0,$
            
            \item[2.)] $\displaystyle \int_{\Omega_r}\langle \nabla \ts R, \nabla f \rangle \diff{V} = 0,$

            \item[3.)] $\displaystyle \int_{\Omega_r} |\nabla \ts R|^2 \diff{V} = \int_{\partial \Omega_r} \frac{\ts R}{|\nabla f|}\langle \nabla \ts R, \nabla f \rangle \diff{S} = -\int_{\partial \Omega_r}|\nabla f| \langle \nabla \ts R, \nabla f \rangle \diff{S},$
            
            \item[4.)] $\displaystyle \int_{\Omega_r} \langle \nabla \ts R, \nabla f\rangle \, \ts e^{-f} \diff{V} = 0,$
            
            \item[5.)] $\displaystyle \int_{\Omega_r} \ts R\, \ts{Rc}(\nabla f, \nabla f)\, \ts e^{-f} \diff{V} = \frac{1}{2}\int_{\partial \Omega_r} |\nabla f| \langle \nabla \ts R, \nabla f \rangle \ts e^{-f} \diff{S} + \frac{1}{2}\int_{\Omega_r} |\nabla \ts R|^2 \, \ts e^{-f} \diff{V},$
            
            \item[6.)] $\displaystyle \int_{\Omega_r} f\langle \nabla \ts R, \nabla f \rangle \, \ts e^{-f} \diff{V} = 0.$
        \end{enumerate}
    \end{lemma}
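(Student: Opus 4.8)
The plan is to derive all six identities from a single mechanism, so that only item (1) requires a genuine computation. The key observation is that on the regular level set $\partial\Omega_r$ the potential is constant ($f\equiv r$), hence the outward unit normal is $\nu=\grad f/|\grad f|$, the weight equals the constant $\ts e^{-r}$, and Hamilton's identity $\ts R+|\grad f|^2-f\equiv\mathrm{const}$ shows that $\ts R$ equals a constant minus $|\grad f|^2$ on $\partial\Omega_r$. Consequently every boundary term I produce is a constant multiple of $\int_{\partial\Omega_r}|\grad f|^{-1}\inner{\grad \ts R}{\grad f}\diff{S}$, so once (1) is in hand the remaining boundary contributions collapse. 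Throughout I will use the two standard soliton facts $\inner{\grad \ts R}{\grad f}=2\,\ts{Rc}(\grad f,\grad f)$ (from $\grad_i \ts R=2\ts R_{ij}\grad^j f$) and the properness of $f$, which guarantees that each $\Omega_r$ and each level set is compact so that the (weighted) divergence theorem applies without any decay hypothesis.

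First I would prove (1) directly: since $\nu=\grad f/|\grad f|$ on $\partial\Omega_r$, the divergence theorem gives $\int_{\partial\Omega_r}|\grad f|^{-1}\inner{\grad \ts R}{\grad f}\diff{S}=\int_{\partial\Omega_r}\partial_\nu \ts R\,\diff{S}=\int_{\Omega_r}\lap \ts R\,\diff{V}=0$ by hypothesis. For (2) I would set $\Phi(r)=\int_{\Omega_r}\inner{\grad \ts R}{\grad f}\diff{V}$ and differentiate in $r$ via the coarea formula, obtaining $\Phi'(r)=\int_{\partial\Omega_r}|\grad f|^{-1}\inner{\grad \ts R}{\grad f}\diff{S}=0$ by (1); since $\Omega_r$ shrinks to the (measure-zero) minimum set of $f$ as $r\downarrow\min f$, we have $\Phi\to0$ there, whence $\Phi\equiv0$. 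Identity (3) starts from $\Div(\ts R\,\grad \ts R)=|\grad \ts R|^2+\ts R\,\lap \ts R=|\grad \ts R|^2$; integrating and rewriting $\inner{\grad \ts R}{\nu}$ on the boundary gives the first equality, and substituting $\ts R=\mathrm{const}-|\grad f|^2$ together with (1) to kill the constant piece gives the second.

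The weighted identities follow the same template with the weighted divergence theorem $\int_{\Omega_r}\Div(\ts e^{-f}X)\diff{V}=\int_{\partial\Omega_r}\ts e^{-f}\inner{X}{\nu}\diff{S}$. For (4) I take $X=\grad \ts R$, so $\Div(\ts e^{-f}\grad \ts R)=\ts e^{-f}(\lap \ts R-\inner{\grad f}{\grad \ts R})=-\ts e^{-f}\inner{\grad \ts R}{\grad f}$, and the boundary term is $\ts e^{-r}$ times the integral in (1), hence zero. For (5) I take $X=\ts R\,\grad \ts R$, use $\lap \ts R=0$ and $\ts R=\mathrm{const}-|\grad f|^2$ to reduce the boundary integral to $-\int_{\partial\Omega_r}\ts e^{-f}|\grad f|\inner{\grad \ts R}{\grad f}\diff{S}$ via (1), convert $\ts R\inner{\grad \ts R}{\grad f}=2\ts R\,\ts{Rc}(\grad f,\grad f)$ in the interior term, and rearrange. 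For (6) I take $X=f\grad \ts R$, so that $\Div(\ts e^{-f}f\grad \ts R)=\ts e^{-f}(1-f)\inner{\grad \ts R}{\grad f}$ after using $\lap \ts R=0$; the boundary term is $r\,\ts e^{-r}$ times (1) and vanishes, and the interior term splits as $\int_{\Omega_r}\ts e^{-f}\inner{\grad \ts R}{\grad f}\diff{V}-\int_{\Omega_r}\ts e^{-f}f\inner{\grad \ts R}{\grad f}\diff{V}$, whose first summand is (4)$\,=0$, leaving (6).

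The main obstacle is less any single computation than setting up this common reduction cleanly: I must handle the additive constant in Hamilton's identity and the constancy of $\ts e^{-f}$ on level sets uniformly so that all boundary terms genuinely reduce to (1), and I must justify the compactness coming from properness of $f$ and the coarea differentiation used in (2). Once (1) is established, identities (3)--(6) amount to bookkeeping around the same integration by parts, and the only place demanding a genuinely different idea is the coarea argument for (2).
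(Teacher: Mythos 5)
Your proposal is correct, and for items (1), (3), (4), (5), (6) it is essentially the paper's own proof: the paper likewise integrates $\Delta \ts R$, $\ts R\,\Delta \ts R$ (unweighted and weighted), and $\mathrm{div}(f\,\nabla \ts R)\,\ts e^{-f}$ over $\Omega_r$, and kills every boundary term by pulling out the constants $f\equiv r$, $\ts e^{-f}\equiv \ts e^{-r}$, and the constant part of $\ts R = f - |\nabla f|^2$ on $\partial\Omega_r$, reducing everything to (1). The one place you genuinely deviate is item (2): the paper simply integrates by parts, $\int_{\Omega_r}\langle\nabla \ts R,\nabla f\rangle\diff{V} = \int_{\partial\Omega_r}\frac{f}{|\nabla f|}\langle\nabla \ts R,\nabla f\rangle\diff{S} - \int_{\Omega_r} f\,\Delta \ts R\diff{V} = r\cdot 0 - 0$, i.e.\ exactly the same $\mathrm{div}(f\,\nabla \ts R)$ device you yourself use (in weighted form) for (6). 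Your coarea argument for (2) also works, but it is heavier than needed and carries the technical burden you flagged: to make it rigorous you must (a) invoke Sard's theorem so that almost every level is regular and (1) applies there, (b) note that the critical set of $f$ contributes nothing because the integrand $\langle\nabla \ts R,\nabla f\rangle$ vanishes identically where $\nabla f = 0$, and (c) get absolute continuity of $\Phi$ from the coarea representation rather than pointwise differentiation alone (your limit claim as $r\downarrow\min f$ is then automatic, again because the integrand vanishes on the minimum set, not because that set has measure zero). So the comparison is: the paper's route is uniform across all six items and needs no measure theory, while yours trades the one-line integration by parts in (2) for a coarea/ODE argument that is correct but requires these extra justifications.
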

    \begin{proof}
        First, we have:
        \begin{equation*}
            0 = \int_{\Omega_r} \Delta \ts R \diff{V} = \int_{\partial \Omega_r} \frac{1}{|\nabla f|} \langle \nabla \ts R, \nabla f \rangle \diff{S}
        \end{equation*}
        showing (1). Now we integrate by parts to get:
        \begin{align*}
            \int_{\Omega_r} \langle \nabla \ts R, \nabla f \rangle \diff{V} &= \int_{\partial \Omega_r} \frac{f}{|\nabla f|} \langle \nabla \ts R, \nabla f \rangle \diff{S} - \int_{\Omega_r} f \Delta \ts R \diff{V}
            \\
            &= r\int_{\partial \Omega_r}\frac{1}{|\nabla f|}\langle \nabla \ts R, \nabla f \rangle \diff{S} - \int_{\Omega_r} f \Delta \ts R \diff{V}.
        \end{align*}
        As $\Delta \ts R = 0$ by assumption, and using (1) we see that both integrals must vanish, showing (2). To show (3) we integrate $\ts R\Delta \ts R$:
        \begin{equation*}
            0 = \int_{\Omega_r} \ts R\Delta \ts R \diff{V} = \int_{\partial \Omega_r} \frac{\ts R}{|\nabla f|} \langle \nabla \ts R, \nabla f \rangle \diff{S} - \int_{\Omega_r} |\nabla \ts R|^2 \diff{V}.
        \end{equation*}
        And by using the normalization \eqref{eq:normalization}, we get (3). Now we have:
        \begin{align*}
            0 &= \int_{\Omega_r} \Delta \ts R \, \ts e^{-f} \diff{V}
            \\
            &= \int_{\partial \Omega_r} \frac{\ts e^{-f}}{|\nabla f|}\langle \nabla \ts R, \nabla f \rangle \diff{S} + \int_{\Omega_r} \langle \nabla \ts R, \nabla f \rangle \, \ts e^{-f} \diff{V}
            \\
            &= \ts e^{-r}\int_{\partial \Omega_r} \frac{1}{|\nabla f|}\langle \nabla \ts R, \nabla f \rangle \diff{S} + \int_{\Omega_r} \langle \nabla \ts R, \nabla f \rangle \, \ts e^{-f} \diff{V}
            \\
            &= \int_{\Omega_r} \langle \nabla \ts R, \nabla f \rangle \, \ts e^{-f} \diff{V}
        \end{align*}
        showing (4). To show (5) we integrate $\ts R\Delta \ts R$ with the weighted volume $\ts e^{-f} \diff{V}$:
        \begin{align*}
            0 &= \int_{\Omega_r} \ts R\Delta \ts R \, \ts e^{-f} \diff{V}
            \\
            &= \int_{\partial\Omega_r} \frac{\ts R}{|\nabla f|} \langle \nabla \ts R, \nabla f \rangle \, \ts e^{-f} \diff{S} - \int_{\Omega_r} \langle \nabla \big(\ts R\, \ts e^{-f}\big), \nabla \ts R \rangle \diff{V}
            \\
            &= \int_{\partial\Omega_r} \frac{\ts R}{|\nabla f|} \langle \nabla \ts R, \nabla f \rangle \, \ts e^{-f} \diff{S} - \int_{\Omega_r} \Big( |\nabla \ts R|^2 - \ts R \langle \nabla \ts R, \nabla f \rangle \Big) \, \ts e^{-f} \diff{V}
            \\
            &= -\int_{\partial\Omega_r} |\nabla f| \langle \nabla \ts R, \nabla f \rangle \, \ts e^{-f} \diff{S} - \int_{\Omega_r} \Big( |\nabla \ts R|^2 - 2\ts R\, \ts{Rc}(\nabla f, \nabla f) \Big) \, \ts e^{-f} \diff{V}.
        \end{align*}
        And lastly for (6), we have:
        \begin{align*}
            \int_{\Omega_r} f\langle \nabla \ts R, \nabla f \rangle \, \ts e^{-f} \diff{V} &= - \int_{\Omega_r} \langle f \nabla \,\ts R, \nabla \ts e^{-f} \rangle \diff{V}
            \\
            &= -\int_{\partial \Omega_r} \frac{f}{|\nabla f|}\langle \nabla \ts R, \nabla f \rangle \, \ts e^{-f} \diff{S} + \int_{\Omega_r} \mathrm{div}(f\, \nabla \ts R) \ts e^{-f} \diff{V}
            \\
            &= -r\ts e^{-r}\int_{\partial \Omega_r} \frac{1}{|\nabla f|}\langle \nabla \ts R, \nabla f \rangle  \diff{S} + \int_{\Omega_r} \mathrm{div}(f \,\nabla \ts R) \ts e^{-f} \diff{V}
            \\
            &= \int_{\Omega_r} \Big( \langle \nabla \ts R, \nabla f \rangle + f \,\Delta \ts R\Big) \ts e^{-f} \diff{V}
            \\
            &= 0.
        \end{align*}
    \end{proof}

\section{The Vanishing V-tensor}
    The tensor $\ts V_{ij}$ has fewer and less complicated terms than $\ts U_{ij}$. We will obtain integral identities by assuming $\ts V_{ij} = 0$, and use these to prove Theorem~\ref{theorem.vanishingV}. We first consider the Hessian term.

    \begin{lemma}\label{lemma.hessianRintegral}
        Let $(M^4, g, f)$ be a complete gradient-shrinking Ricci soliton satisfying \eqref{eq:soliton} and \eqref{eq:normalization}, and assume that $\Delta \ts R = 0$. Then if $r$ is a regular value of $f$ and $\Omega_r =  \{ x \in M \, | \, f(x) \leq r \}$, we have:
        \begin{equation*}
            \int_{\Omega_r} \nabla^2 \ts R(\nabla f, \nabla f) \, \ts e^{-f} \diff{V} = -\ts e^{-r} \int_{\Omega_r} |\nabla \ts R|^2  \diff{V} + \frac{1}{2}\int_{\Omega_r} |\nabla \ts R|^2 \, \ts e^{-f} \diff{V}.
        \end{equation*}
    \end{lemma}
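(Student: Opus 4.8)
The plan is to compute the integral of the Hessian term $\nabla^2 R(\nabla f, \nabla f)$ against the weighted measure $e^{-f}\,dV$ by relating it to the already-established integral identities in Lemma~\ref{lemma.integralIdentities}. The natural starting point is the observation that $\nabla^2 R(\nabla f,\nabla f) = \langle \nabla_{\nabla f}\nabla R, \nabla f\rangle$, which I would rewrite as a directional derivative of $\langle \nabla R, \nabla f\rangle$ along $\nabla f$ minus a correction term. Concretely, using the soliton equation $\nabla^2 f = \rho g - \mathrm{Rc}$ (with $\rho = \tfrac12$), one has
\begin{equation*}
\nabla_{\nabla f}\langle \nabla R, \nabla f\rangle = \nabla^2 R(\nabla f, \nabla f) + \langle \nabla R, \nabla_{\nabla f}\nabla f\rangle = \nabla^2 R(\nabla f, \nabla f) + \tfrac12\langle \nabla R, \nabla f\rangle - \mathrm{Rc}(\nabla R, \nabla f).
\end{equation*}
This suggests expressing the desired integrand in terms of quantities we can already control.

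First I would establish the right substitution for $\langle \nabla R, \nabla f\rangle$. Differentiating the normalization identity $R + |\nabla f|^2 = f$ (equivalently $R = f - |\nabla f|^2$) used in the proof of part (3) gives $\nabla R = \nabla f - \nabla |\nabla f|^2 = \nabla f - 2\nabla^2 f(\nabla f, \cdot)$. Pairing with $\nabla f$ yields a clean formula for $\langle \nabla R, \nabla f\rangle$ in terms of $|\nabla f|^2$ and $\mathrm{Rc}(\nabla f, \nabla f)$, via the soliton equation. The aim is to convert the boundary-free weighted integral $\int_{\Omega_r}\nabla^2 R(\nabla f, \nabla f)\,e^{-f}\,dV$ into a combination of $\int_{\Omega_r}|\nabla R|^2 e^{-f}\,dV$ and $\int_{\Omega_r}R\,\mathrm{Rc}(\nabla f,\nabla f)\,e^{-f}\,dV$, both of which are tied together by identity (5) of Lemma~\ref{lemma.integralIdentities}. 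In parallel, the unweighted identity (3), $\int_{\Omega_r}|\nabla R|^2\,dV$, should account for the $e^{-r}$-coefficient term appearing on the right-hand side of the claimed equation, which strongly hints that an integration by parts producing a boundary contribution over $\partial\Omega_r$ (where $f \equiv r$, so $e^{-f} = e^{-r}$ is constant) is where that factor is generated.

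The concrete route I would take is to integrate by parts directly on $\int_{\Omega_r}\nabla^2 R(\nabla f,\nabla f)\,e^{-f}\,dV$, treating it as $\int_{\Omega_r}\langle \nabla\langle\nabla R,\nabla f\rangle, \nabla f\rangle e^{-f}\,dV$ minus lower-order terms. The divergence theorem with the weight $e^{-f}$ turns $\langle \nabla(\,\cdot\,), \nabla f\rangle e^{-f}$ into a boundary term on $\partial\Omega_r$ plus a bulk term involving $\mathrm{div}(\nabla f\, e^{-f}) = (\Delta f - |\nabla f|^2)e^{-f}$; using $\Delta f = 2\rho - R = 1 - R$ on a shrinker (taking the trace of the soliton equation with $n=4$, $\rho = \tfrac12$) collapses several terms. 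On $\partial\Omega_r$, the constancy of $e^{-f} = e^{-r}$ pulls the scalar factor out of the boundary integral, letting me invoke identity (3) to rewrite that boundary piece as $-e^{-r}\int_{\Omega_r}|\nabla R|^2\,dV$, which matches the first term on the right. The remaining bulk terms should, after applying identities (4) and (5), consolidate into exactly $\tfrac12\int_{\Omega_r}|\nabla R|^2\,e^{-f}\,dV$.

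The main obstacle I anticipate is bookkeeping the interplay between the weighted and unweighted identities: the appearance of the $e^{-r}$ factor means one cannot stay purely in the weighted category, and the boundary term must be handled by temporarily exploiting that $e^{-f}$ is constant on the level set $\partial\Omega_r$ before reducing to the unweighted identity (3). Getting the signs right in the integration by parts — particularly tracking whether the boundary flux enters with a plus or minus and correctly substituting $\mathrm{Rc}(\nabla f, \nabla f)$ via the soliton equation versus the normalization identity — will require care, since identity (5) already packages $\int_{\Omega_r}R\,\mathrm{Rc}(\nabla f,\nabla f)\,e^{-f}\,dV$ with both a boundary term and $\tfrac12\int|\nabla R|^2 e^{-f}\,dV$. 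I expect that once these substitutions are made consistently and the boundary contributions from identities (1) and (3) are matched against the $e^{-r}$-weighted boundary flux, all extraneous terms cancel and the stated identity emerges directly.
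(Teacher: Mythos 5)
Your plan is essentially the paper's own proof: the same decomposition $\nabla^2 R(\nabla f,\nabla f)=\langle\nabla\langle\nabla R,\nabla f\rangle,\nabla f\rangle-\tfrac12\langle\nabla R,\nabla f\rangle+\tfrac12|\nabla R|^2$ (the last term coming from $\mathrm{Rc}(\nabla R,\nabla f)=\tfrac12|\nabla R|^2$ via the soliton identity $\mathrm{Rc}(\nabla f)=\tfrac12\nabla R$, which is exactly the source of the $\tfrac12\int_{\Omega_r}|\nabla R|^2 e^{-f}\,\mathrm{d}V$ term), the same weighted integration by parts, the same exploitation of $e^{-f}\equiv e^{-r}$ on $\partial\Omega_r$ combined with identity (3) to produce $-e^{-r}\int_{\Omega_r}|\nabla R|^2\,\mathrm{d}V$, and the same vanishing of the leftover bulk term. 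Two slips in your write-up happen to be harmless: the trace of the soliton equation gives $\Delta f=n\rho-R=2-R$, not $2\rho-R=1-R$; and the bulk integrand is an affine function of $f$ times $\langle\nabla R,\nabla f\rangle$, which is annihilated by identities (4) and (6) (identity (5) is never actually needed), so the wrong constant in $\Delta f$ changes nothing in the final identity.
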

    \begin{proof}
        We integrate:
        \begin{align*}
            \int_{\Omega_r} \nabla^2 &\ts R(\nabla f, \nabla f) \, \ts e^{-f} \diff{V}
            \\ &= \int_{\Omega_r} \langle \nabla \langle \nabla \ts R, \nabla f \rangle, \nabla f \rangle \, \ts e^{-f} \diff{V} - \frac{1}{2}\int_{\Omega_r} \langle \nabla \ts R, \nabla f \rangle \, \ts e^{-f} \diff{V}\\
            &\hspace{0.5in}+ \frac{1}{2} \int_{\Omega_r} |\nabla \ts R|^2 \, \ts e^{-f} \diff{V}
            \\
            &= -\int_{\Omega_r} \langle \nabla \langle \nabla \ts R, \nabla f \rangle, \nabla \ts e^{-f} \rangle \diff{V} + \frac{1}{2} \int_{\Omega_r} |\nabla \ts R|^2 \, \ts e^{-f} \diff{V}
            \\
            &= \int_{\partial \Omega_r} |\nabla f| \langle \nabla \ts R, \nabla f \rangle \ts e^{-f} \diff{S} + \int_{\Omega_r} \Delta (\ts e^{-f}) \langle \nabla  \ts R, \nabla f \rangle \diff{V}\\
            &\hspace{0.5in}+ \frac{1}{2}\int_{\Omega_r} |\nabla \ts R|^2 \, \ts e^{-f} \diff{V}
            \\
            &= \int_{\partial \Omega_r} |\nabla f| \langle \nabla \ts R, \nabla f \rangle \ts e^{-f} \diff{S} + \int_{\Omega_r} (|\nabla f|^2 - \Delta f) \langle \nabla \ts R, \nabla f \rangle \ts e^{-f} \diff{V}\\
            &\hspace{0.5in}+ \frac{1}{2}\int_{\Omega_r} |\nabla \ts R|^2 \, \ts e^{-f} \diff{V}
            \\
            &= \int_{\partial \Omega_r} |\nabla f| \langle \nabla \ts R, \nabla f \rangle \ts e^{-f} \diff{S} + \int_{\Omega_r} (f - 2)\langle \nabla \ts R, \nabla f \rangle \ts e^{-f}\diff{V}\\
            &\hspace{0.5in}+ \frac{1}{2} \int_{\Omega_r} |\nabla \ts R|^2 \, \ts e^{-f}\diff{V}.
        \end{align*}
        Lemma~\ref{lemma.alteredIntegralIdentities}(1) and (3) shows that the integral of $(f-2)\langle \nabla \ts R,\nabla f\rangle$ vanishes, giving us our desired result.
    \end{proof}

    It will be necessary to control the $|\nabla \ts R|^2$ integral as we allow $r\to \infty$. Work already exists for controlling the integral of $|\ts{Rc}|^2$, which we are able to translate to our problem.

    \begin{lemma}\label{lemma.gradRbounded}
        Let $(M^n,g_{ij},f)$ be a complete gradient-shrinking Ricci soliton satisfying \eqref{eq:soliton}. Then:
        \begin{equation*}
            \int_M |\nabla \ts R|^2 \ts e^{-\alpha f}\diff{V} < \infty
        \end{equation*}
        for any $\alpha > 0$.
    \end{lemma}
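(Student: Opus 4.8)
The plan is to reduce the claimed weighted bound on $|\nabla \ts R|^2$ to a \emph{known} weighted $L^2$ bound on $|\ts{Rc}|^2$ by means of two standard soliton identities. First I would record the contracted identity
\[
\nabla_i \ts R = 2\,\ts R_{ij}\nabla^j f,
\]
valid on any gradient Ricci soliton; it follows from applying the contracted second Bianchi identity and the Bochner commutation formula $\Delta \nabla_j f = \nabla_j \Delta f + \ts R_{jk}\nabla^k f$ to \eqref{eq:soliton}, together with the trace relation $\ts R + \Delta f = n/2$. This gives at once the pointwise estimate $|\nabla \ts R|^2 \le 4\,|\ts{Rc}|^2\,|\nabla f|^2$. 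Next I would control $|\nabla f|^2$ by $f$: using Hamilton's normalization $\ts R + |\nabla f|^2 = f$ as in \cite{hamilton:1988}, together with the nonnegativity of scalar curvature on a complete shrinking soliton (B.-L.~Chen), we obtain $|\nabla f|^2 = f - \ts R \le f$, and in particular $f \ge 0$. Combining, $|\nabla \ts R|^2 \le 4\,f\,|\ts{Rc}|^2$ pointwise.

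The factor of $f$ is then absorbed into the exponent by the elementary inequality $t\,\ts e^{-\epsilon t} \le (e\epsilon)^{-1}$ for $t \ge 0$. For any $0 < \epsilon < \alpha$ this yields
\[
\int_M |\nabla \ts R|^2\,\ts e^{-\alpha f}\diff{V} \;\le\; 4\int_M |\ts{Rc}|^2\,f\,\ts e^{-\alpha f}\diff{V} \;\le\; \frac{4}{e\epsilon}\int_M |\ts{Rc}|^2\,\ts e^{-(\alpha-\epsilon) f}\diff{V}.
\]
Choosing $\epsilon = \alpha/2$ reduces the entire problem to the finiteness of $\int_M |\ts{Rc}|^2\,\ts e^{-\beta f}\diff{V}$ with $\beta = \alpha/2 > 0$, which is the weighted Ricci estimate I intend to import.

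Finally I would invoke the existing weighted curvature estimates for shrinking solitons: the work of Cao--Zhou and Munteanu--Wang establishes that $\int_M |\ts{Rc}|^2\,\ts e^{-\beta f}\diff{V} < \infty$ for every $\beta > 0$, ultimately because the quadratic lower growth $\tfrac14(r-c)^2 \le f$ and the at-most-Euclidean volume growth force the Gaussian weight to defeat the curvature growth. Applying this with $\beta = \alpha/2$ closes the argument. I expect the main obstacle to be precisely this last input: for small $\alpha$ one cannot simply dominate $\ts e^{-(\alpha-\epsilon)f}$ by $\ts e^{-f}$, so one genuinely needs the Ricci estimate at \emph{all} positive exponents rather than only at $\beta = 1$. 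Confirming that the cited estimates hold uniformly down to arbitrarily small $\beta$ — and thus that the reduction above is legitimate for every $\alpha > 0$ — is the delicate point.
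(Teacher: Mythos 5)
Your proposal is correct and follows essentially the same route as the paper: both reduce to $|\nabla \mathrm{R}|^2 \le 4\,|\mathrm{Rc}|^2|\nabla f|^2$ via the soliton identity plus Cauchy--Schwarz, absorb the $|\nabla f|^2$ factor into half of the exponential weight, and conclude from the weighted Ricci bound $\int_M |\mathrm{Rc}|^2 e^{-\lambda f}\,\mathrm{d}V < \infty$ for \emph{every} $\lambda>0$ (the paper cites Munteanu--Sesum, applied at $\lambda=\alpha/2$), so the ``delicate point'' you flag at the end is precisely what that cited result supplies. The only difference is cosmetic, in the absorption step: you use Hamilton's identity $|\nabla f|^2\le f$ (with $\mathrm{R}\ge 0$) and the elementary bound $t\,e^{-\epsilon t}\le (e\epsilon)^{-1}$, whereas the paper invokes the Cao--Zhou growth estimate on $|\nabla f|$ and splits the integral over a compact set and its complement, where $|\nabla f|^2 e^{-\alpha f/2}\le 1$.
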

    \begin{proof}
        By the Cauchy-Schwarz inequality we have:
        \begin{equation*}
            \frac{1}{4}|\nabla \ts R|^2 = |\mathrm{Rc}(\nabla f)|^2 \leq |\mathrm{Rc}|^2 |\nabla f|^2.
        \end{equation*}
        By H.-D.\ Cao and D.\ Zhou's growth estimates for the potential function \cite{caozhou:2010}, we have:
        \begin{equation*}
            |\nabla f|^2(x) \leq \frac{1}{4}(d(x) + c_2)^2,
        \end{equation*}
        where $d(x)$ is the distance function from a fixed point $x_0$ and $c_1,c_2$ are constants depending on $n$ and the geometry of $g_{ij}$ of the unit ball $B_{x_0}(1)$. Hence, there exists a compact set $K$ where $|\nabla f|^2 \ts e^{\frac{-\alpha f}{2}} \leq 1$ on $M\backslash K$. We have
        \begin{equation*}
            \int_M |\mathrm{Rc}|^2 |\nabla f|^2 \ts e^{-\alpha f} \diff{V}= \int_K |\mathrm{Rc}|^2 |\nabla f|^2 \ts e^{-\alpha f} \diff{V} +  \int_{M \backslash K} |\mathrm{Rc}|^2 |\nabla f|^2 \ts e^{-\alpha f} \diff{V}.
        \end{equation*}
        The first integral over the compact set $K$ is finite. We show that the second integral is also finite:
        \begin{equation*}
            \begin{split}
                \int_{M \backslash K} |\ts{Rc}|^2 |\nabla f|^2 \ts e^{-\alpha f} \diff{V} &=  \int_{M \backslash K} (|\ts{Rc}|^2 \ts e^{-\alpha f/2}) |\nabla f|^2 \ts e^{-\alpha f/2}\diff{V}\\
                &\leq  \int_{M \backslash K} |\ts{Rc}|^2 \ts e^{-\alpha f/2}\diff{V}.
            \end{split}
        \end{equation*}
        By O.\ Munteanu and N.\ Sesum's results in \cite{munteanusesum:2013}, we have:
        \begin{equation*}
            \int_M |\ts{Rc}|^2 \ts e^{-\lambda f} \diff{V} < \infty,
        \end{equation*}
        for any $\lambda > 0$, so after setting $\lambda = \frac{\alpha}{2}$ the result follows.
    \end{proof}

    \begin{lemma}\label{lemma.gradRvanishes}
        Let $(M^n,g_{ij},f)$ be a complete gradient-shrinking Ricci soliton satisfying \eqref{eq:soliton}. Then if $\Omega_r = \{ f\leq r\}$ for regular values $r > 0$ of $f$ and $\alpha >0$:
        \begin{equation*}
            \lim_{r\rightarrow \infty} \Big(\ts e^{-\alpha r}\int_{\Omega_r} |\nabla \ts R|^2 \diff{V}\Big) = 0.
        \end{equation*}
    \end{lemma}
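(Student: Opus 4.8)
The plan is to deduce the statement directly from the global finiteness supplied by Lemma \ref{lemma.gradRbounded}, exploiting the fact that that lemma holds for \emph{every} positive weight. The only idea needed is to split the exponential: rather than comparing $\ts e^{-\alpha r}$ against the weight $\ts e^{-\alpha f}$, I would compare it against a strictly smaller weight $\ts e^{-\beta f}$ and let the leftover factor $\ts e^{-(\alpha-\beta)r}$ produce the decay.

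Concretely, fix $\alpha>0$ and choose any $\beta$ with $0<\beta<\alpha$ (for definiteness $\beta=\alpha/2$). On the sublevel set $\Omega_r=\{f\leq r\}$ we have $f\leq r$, hence $\ts e^{-\beta f}\geq \ts e^{-\beta r}$ pointwise. Since $|\nabla \ts R|^2\geq 0$, this yields
\begin{equation*}
\ts e^{-\beta r}\int_{\Omega_r}|\nabla \ts R|^2\diff{V}\;\leq\;\int_{\Omega_r}|\nabla \ts R|^2\,\ts e^{-\beta f}\diff{V}\;\leq\;\int_{M}|\nabla \ts R|^2\,\ts e^{-\beta f}\diff{V}\;=:\;C_\beta,
\end{equation*}
and $C_\beta<\infty$ by Lemma \ref{lemma.gradRbounded} applied with the admissible parameter $\beta>0$. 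Rearranging gives the a priori bound $\int_{\Omega_r}|\nabla \ts R|^2\diff{V}\leq C_\beta\,\ts e^{\beta r}$.

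It then remains to multiply through by $\ts e^{-\alpha r}$:
\begin{equation*}
\ts e^{-\alpha r}\int_{\Omega_r}|\nabla \ts R|^2\diff{V}\;\leq\;C_\beta\,\ts e^{(\beta-\alpha)r}\;\xrightarrow[r\to\infty]{}\;0,
\end{equation*}
since $\beta-\alpha<0$ and $\beta$ was chosen independently of $r$. This would complete the argument.

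There is essentially no analytic obstacle here; the content is entirely contained in Lemma \ref{lemma.gradRbounded}, and the only subtlety, such as it is, lies in recognizing that its validity for \emph{all} positive weights is exactly what permits the exponent-splitting trick. Had Lemma \ref{lemma.gradRbounded} been available only for the single weight $\alpha$, one would instead be forced to extract decay from a borderline-finite integral, e.g. by a monotonicity or coarea argument applied to $\psi(r)=\int_{\Omega_r}|\nabla \ts R|^2\diff{V}$, which is precisely the complication the flexible weight lets us sidestep.
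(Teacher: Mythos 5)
Your proposal is correct and is essentially the paper's own argument: the paper also invokes Lemma \ref{lemma.gradRbounded} with the halved weight $\ts e^{-\alpha f/2}$ (your choice $\beta=\alpha/2$), bounds $\ts e^{-\alpha r/2}\leq \ts e^{-\alpha f/2}$ on $\Omega_r$ to control the unweighted integral by the finite weighted one, and lets the leftover factor $\ts e^{-\alpha r/2}$ supply the decay as $r\to\infty$. Your phrasing with a general $\beta\in(0,\alpha)$ is just a mild generalization of the same exponent-splitting trick.
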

    \begin{proof}
    We first apply Lemma~\ref{lemma.gradRbounded}:
    \begin{equation*}
        \int_M |\nabla \ts R|^2 \ts e^{-\alpha f/2} \diff{V} < \infty.
    \end{equation*}
    Since $f \leq r$ on $\Omega_r$, we also have $\ts e^{-\alpha r/2} \leq \ts e^{-\alpha f/2}$:
    \begin{equation*}
        \frac{\ts e^{-\alpha r}}{2}\int_{\Omega_r} |\nabla \ts R|^2 \diff{V} = \frac{\ts e^{-\alpha r/2}}{2} \int_{\Omega_r} |\nabla \ts R|^2 \ts e^{-\alpha r/2}\diff{V} \leq \frac{\ts e^{-\alpha r/2}}{2}\int_{\Omega_r} |\nabla \ts R|^2 \ts e^{-\alpha f/2}\diff{V}.
    \end{equation*}
    Now let $r\rightarrow \infty$, giving us the desired result:
    \begin{equation*}
        \lim_{r\rightarrow \infty}\Big(\frac{\ts e^{-\alpha r}}{2}\int_{\Omega_r}|\nabla \ts R|^2 \diff{V}\Big) \leq \lim_{r\rightarrow \infty}\Big(\frac{\ts e^{-\alpha r/2}}{2}\int_{\Omega_r} |\nabla \ts R|^2 \ts e^{-\alpha f/2}\diff{V}\Big) = 0.
    \end{equation*}
    \end{proof}

    \section{Proof of Theorem~\ref{theorem.vanishingV}}
    \begin{proof}
        Let $\Omega_r = \{ x \in M \, | \, f(x) \leq r \}$ where $r$ is a regular value of $f$. By Lemma~\ref{lemma.tracesUV}, $\ts V\equiv 0$ implies $\Delta \ts R = 0$. By Lemma~\ref{lemma.integralIdentities}(5) and (3) we have:
        \begin{align*}
             \int_{\Omega_r} \ts R\, \ts{Rc}(\nabla f, \nabla f) \, \ts e^{-f} \diff{V} &= -\frac{1}{2}\int_{\Omega_r} \ts R\Delta_f \ts R \ts e^{-f}\diff{V}
            \\
            &= -\frac{1}{2}\int_{ \Omega_r} |\nabla \ts R|^2 \, \ts e^{-r} \diff{V} + \frac{1}{2}\int_{\Omega_r} |\nabla \ts R|^2 \, \ts e^{-f} \diff{V}.
        \end{align*}
        And using Lemma~\ref{lemma.hessianRintegral}, we compute the integral of $\ts V(\nabla f, \nabla f)$ with respect to the weighted volume element $\ts e^{-f}\diff{V}$, noting that the term involving $\Delta \ts R$ vanishes:
        \begin{align*}
            \int_{\Omega_r} \ts V(&\nabla f, \nabla f) \, \ts e^{-f} \diff{V} \\
            &= -\int_{\Omega_r} \Big(\nabla^2 \ts R(\nabla f, \nabla f) - \ts R \, \mathrm{Rc}(\nabla f, \nabla f) + \frac{1}{4}\ts R^2 |\nabla f|^2 \Big)\, \ts e^{-f} \diff{V}
            \\
            &= \frac{\ts e^{-r}}{2}\int_{\Omega_r} |\nabla \ts R|^2 \diff{V} - \frac{1}{4}\int_{\Omega_r}\ts R^2|\nabla f|^2 \, \ts e^{-f} \diff{V}.
        \end{align*}
        Letting $r\rightarrow \infty$, by Lemma~\ref{lemma.gradRvanishes} the first integral vanishes. We obtain:
        \begin{equation}
            \label{eq.VweightedIntegral}
            \int_M \ts V(\nabla f, \nabla f)\, \ts e^{-f}\diff{V} = -\int_{M}\frac{1}{4}\ts R^2 |\nabla f|^2 \, \ts e^{-f} \diff{V}.
        \end{equation}
        Assuming $\ts V_{ij} = 0$, we must have either $\ts R = 0$ or $|\nabla f| = 0$ pointwise. By the work of S.\ Pigola, M.\ Rimoldi, and A.\ Setti \cite{pigolarimoldi:2011}, either $\ts R>0$ everywhere, hence $\nabla f = 0$ and the manifold is Einstein, or if $\ts R=0$ anywhere, then the soliton must be isometric to the Gaussian soliton on $\mathbb{R}^4$. The values $f_{\min}\in\{0,2\}$ follow from \eqref{eq:normalization}: if Einstein then $\nabla f\equiv 0$ and $\mathrm{Rc}=\tfrac{1}{2}g$, giving $\ts R=2$ and $f_{\min}=2$; if Gaussian then $\ts R=0$ and $f_{\min}=0$.

        For the converse, suppose $(M^4, g, f)$ is Einstein. Then as the scalar curvature is constant, we have:
        \begin{align*}
            \ts V_{ij} &= \ts R\, \ts R_{ij} - \frac{1}{4} \ts R^2 g_{ij}
            = -\ts R \Big(\ts R_{ij} - \frac{1}{4} \ts R \, g_{ij}\Big) = 0.
        \end{align*}
        If the manifold is Gaussian, then $\ts R=0$ and again $\ts V=0$.
    \end{proof}

    \section{Weighted Bach Integral}
    In \cite{caochen:2013} H.-D.\ Cao and Q.\ Chen consider how the Bach tensor links to the Cotton tensor of a conformal metric, called $\ts D_{ijk}$. The norm of $\ts D_{ijk}$ contains rich geometric information, and they were able to use it to prove their classification results. We will link Bach-like tensors to $\ts D_{ijk}$ as well. Recall that the Cotton tensor is given by:
    \[
        \ts C_{ijk} = -\frac{n-2}{n-3}\nabla_l \ts W_{ijkl},
    \]
    where $\ts W_{ijkl}$ is the Weyl curvature. The Cotton tensor is skew-symmetric in the first two indices and trace-free in any two indices:
    \[
    \ts C_{ijk} = - \ts C_{jik} \quad \mathrm{and} \quad g^{ij}\,\ts C_{ijk} = g^{ik}\,\ts C_{ijk} = 0.
    \]
    The Cotton tensor for the conformal metric $\displaystyle \widehat{g}_{ij}= \ts e^{\frac12 f}\, g_{ij}$ is given by:
    \[
        \ts D_{ijk} = \ts C_{ijk} + \ts W_{ijkl}\,\nabla_l f.
    \]
    On a complete gradient-shrinking Ricci soliton, $\ts D_{ijk}$ can be expressed as:
    \begin{align*}
        \ts D_{ijk}&= \frac{1}{n-2}\big(\ts R_{jk}\nabla_i f - \ts R_{ik} \nabla_j f\big) + \frac{1}{2(n-1)(n-2)}\big(g_{jk}\,\nabla_i \ts R - g_{ik}\,\nabla_j \ts R\big)\\
        &\quad -\frac{1}{(n-1)(n-2)} \ts R \, \big(g_{jk}\, \nabla_i f - g_{ik} \, \nabla_j f\big).
    \end{align*}
    H.-D.\ Cao and Q.\ Chen \cite{caochen:2013} showed how the Bach tensor can be expressed in terms of $\ts C_{ijk}$ and $\ts D_{ijk}$:
    \[
        \ts B_{ij} = -\frac{1}{n-2}\Big(\nabla_k \ts D_{ikj} + \frac{n-3}{n-2} \ts C_{jli}\nabla_l f\Big).
    \]

    \begin{lemma}\label{lemma.weightedBachIntegral}
        Let $(M^n,g_{ij},f)$ be a complete gradient-shrinking Ricci soliton satisfying \eqref{eq:soliton}. Then
        \begin{equation*}
            \int_M \ts B(\nabla f, \nabla f) e^{-f} \diff{V} = -\frac{1}{2}\int_M |\ts D_{ijk}|^2 e^{-f} \diff{V}.
        \end{equation*}
    \end{lemma}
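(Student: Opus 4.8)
The plan is to substitute the divergence expression for the Bach tensor into $\ts B(\nabla f,\nabla f)$, integrate against the weighted measure $e^{-f}\diff{V}$ over a sublevel set $\Omega_r=\{f\le r\}$, and show that after integration by parts the only surviving bulk term reassembles into $|\ts D_{ijk}|^2$. First I would contract the identity $\ts B_{ij} = -\frac{1}{n-2}\bigl(\nabla_k \ts D_{ikj} + \frac{n-3}{n-2}\,\ts C_{jli}\nabla_l f\bigr)$ with $\nabla_i f\,\nabla_j f$. The Cotton-tensor term dies immediately, since $\ts C_{jli}$ is skew in its first two indices $j,l$ and hence $\ts C_{jli}\nabla_j f\,\nabla_l f = 0$, leaving $\ts B(\nabla f,\nabla f) = -\frac{1}{n-2}(\nabla_k \ts D_{ikj})\nabla_i f\,\nabla_j f$.

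Next I would integrate by parts. Setting $X_k = \ts D_{ikj}\nabla_i f\,\nabla_j f$, the weighted divergence theorem gives $\int_{\Omega_r}(\Div X)\,e^{-f}\diff{V} = \int_{\partial\Omega_r}\langle X,\nu\rangle\, e^{-f}\diff{S} + \int_{\Omega_r}\langle X,\nabla f\rangle\, e^{-f}\diff{V}$. Because $\ts D_{ikj}$ is skew in $i,k$, the interior term $\langle X,\nabla f\rangle = \ts D_{ikj}\nabla_i f\,\nabla_j f\,\nabla_k f$ vanishes. Expanding $\Div X$ by the product rule and using the soliton equation $\nabla_k\nabla_i f = \tfrac12 g_{ki} - \ts R_{ki}$, the term $\ts D_{ikj}(\nabla_k\nabla_i f)\nabla_j f$ vanishes since the Hessian is symmetric in $k,i$ while $\ts D$ is skew there, and the $g_{kj}$-part of the remaining term $\ts D_{ikj}\nabla_i f\,(\nabla_k\nabla_j f)$ drops out because $\ts D$ is trace-free on its last two indices. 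This reduces the bulk integral to $\int_{\Omega_r}\ts D_{ikj}\,\ts R_{kj}\,\nabla_i f\, e^{-f}\diff{V}$ (the two minus signs, one from the parts integration and one from $-\ts R_{kj}$ in the Hessian, cancel).

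The key algebraic step is the pointwise identity $\ts D_{ijk}\,\ts R_{jk}\,\nabla_i f = \frac{n-2}{2}|\ts D_{ijk}|^2$. I would establish this by contracting the explicit formula for $\ts D_{ijk}$ against $\ts D_{ijk}$ itself: the two $\nabla \ts R$-groups and the scalar-curvature groups annihilate against $\ts D$ by its trace-freeness, while the skew-symmetry of $\ts D$ renders the two Ricci terms equal, yielding $|\ts D|^2 = \frac{2}{n-2}\,\ts D_{ijk}\,\ts R_{jk}\,\nabla_i f$. Establishing that $\ts D$ is trace-free on every pair of indices — which follows by tracing its explicit expression and invoking the soliton identity $\nabla_i \ts R = 2\ts R_{ij}\nabla_j f$ — is the small computation underlying this step. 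Substituting back, letting $r\to\infty$, and collecting the constant $-\frac{1}{n-2}\cdot\frac{n-2}{2} = -\frac12$ yields the claim.

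The main obstacle will be analytic rather than algebraic: justifying that the boundary integral $\int_{\partial\Omega_r}\langle X,\nu\rangle\, e^{-f}\diff{S}$ vanishes in the limit $r\to\infty$ and that every weighted integral in sight converges. This requires the quadratic growth of $f$ together with the curvature and potential estimates of Cao–Zhou and Munteanu–Sesum already invoked in Lemma \ref{lemma.gradRbounded}, used here to control $|\ts D|$, $|\Rc|$, and $|\nabla f|$ against the Gaussian weight $e^{-f}$ so that the exterior contributions decay and the boundary flux is negligible.
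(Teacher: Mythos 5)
Your proposal is correct and follows essentially the same route as the paper: contract the Cao--Chen formula for $\ts B_{ij}$ in terms of $\ts D_{ijk}$ and $\ts C_{ijk}$ with $\nabla_i f\,\nabla_j f$, kill the Cotton term by skew-symmetry, integrate by parts against $\ts e^{-f}$, use the soliton equation on the Hessian terms together with the skew and trace-free properties of $\ts D$, and finish with the pointwise identity $\ts D_{ijk}\ts R_{jk}\nabla_i f = \tfrac{n-2}{2}|\ts D_{ijk}|^2$. The only real difference is that your anticipated ``main obstacle'' does not exist: the boundary flux $\int_{\partial\Omega_r}\ts D_{ikj}\nabla_i f\,\nabla_j f\,\nabla_k f\,|\nabla f|^{-1}\ts e^{-f}\diff{S}$ vanishes identically by the very same skew-symmetry of $\ts D$ in $i,k$ that you used for the interior term (this is exactly how the paper disposes of it), so no Cao--Zhou or Munteanu--Sesum decay estimates are needed at all.
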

    \begin{proof}
        Let $r$ be a regular value of $f$ and $\Omega_r=\{f\leq r\}$. We use the antisymmetric properties of the Cotton and $\ts D_{ijk}$ tensor:
        \begin{align*}
            (n-2)\int_{\Omega_r} &\ts B_{ij} \nabla_i f\nabla_j f \,\ts e^{-f}\diff{V}
            \\
            &= -\int_{\Omega_r} \nabla_k \ts D_{ikj} \nabla_i f\nabla_j f \ts e^{-f}\diff{V} - \frac{n-3}{n-2} \int_{\Omega_r} \ts C_{jki} \nabla_i f \nabla_j f \nabla_k  f \,\ts e^{-f}\diff{V}\\
            &= - \int_{\partial \Omega_r} \ts D_{ikj} \nabla_i f\nabla_j f\nabla_k f \cdot |\nabla f|^{-1}\, \ts e^{-f} \diff{S}  \\
            &\quad + \int_{\Omega_r} \ts D_{ikj}(\ts R_{ik}\nabla_j f + \ts R_{kj}\nabla_i f) \, \ts e^{-f}\diff{V}\\
            &\quad- \int_M \ts D_{ikj} \nabla_i f\nabla_j f\nabla_k f \, \ts e^{-f}\diff{V}
            \\
            &= \int_{\Omega_r} \ts D_{ikj} (\ts R_{ik} \nabla_j f + \ts R_{kj}\nabla_i f)\, \ts e^{-f} \diff{V}\\
            &= -\frac{1}{2}\int_{\Omega_r} \ts D_{ijk} ( \ts R_{ik}\nabla_j f - \ts R_{kj} \nabla_i f) \, \ts e^{-f} \diff{V}\\
            &= - \frac{(n-2)}{2}\int_{\Omega_r} |\ts D_{ijk}|^2\, \ts e^{-f} \diff{V}.
        \end{align*}
    \end{proof}

    \section{Proof of Theorem~\ref{theorem.vanishingBachLike}}
        First observe that for any $(\alpha,\beta)\in \mathcal{C}$, we can write:
        \[
            \mathfrak{B}_{ij} = 2\alpha \, \ts B_{ij} + \Big(\beta - \frac{\alpha}{3}\Big) \, \ts V_{ij}.
        \]
        Assuming $\mathfrak{B}_{ij} = 0$ and taking a trace we have:
        \[
            g^{ij}\,\mathfrak{B}_{ij} = (3\beta-\alpha)\Delta \ts R = 0 \implies \Delta \ts R = 0,
        \]
        by Lemma~\ref{lemma.tracesUV}. Then we have:
        \begin{align*}
            \int_M \mathfrak{B}(\nabla f, \nabla f)\, \ts e^{-f}\diff{V} & = -\alpha \int_M |\ts D_{ijk}|^2 \, \ts e^{-f} \diff{V} - \frac14\Big(\beta-\frac{\alpha}{3}\Big)\int_M \ts R^2\,|\nabla f|^2 \, \ts e^{-f}\,\diff{V} = 0.
        \end{align*}
        Since $(\alpha,\beta)\in\mathcal{C}$, if $\alpha > 0$ and $\beta > \alpha/3$ then both integrals are nonnegative and must each vanish, so $|\ts D_{ijk}|^2 \equiv 0$ and $\ts R^2|\nabla f|^2 \equiv 0$. We conclude that $M$ is either Einstein or isometric to the Gaussian soliton as in the proof of Theorem~\ref{theorem.vanishingV}. The argument for $\alpha < 0$ and $\beta < \alpha/3$ is analogous.

    \section{The Vanishing U-tensor}
    The analysis we used for the vanishing $V$-tensor doesn't completely carry over to $\mathrm{U}=0$, but will allow us to prove Theorem \ref{theorem.vanishingU}. Define a weighted volume element of the form
    \[
    \diff{V}_c(\mathbf{x}) = \ts{e}^{-cf(\textbf{x})}\diff{V} \quad \mathrm{where} \quad c>0.
    \]
    \begin{lemma}\label{lemma.alteredIntegralIdentities}
        Let $(M^4,f,g)$ be a complete gradient-shrinking Ricci soliton satisfying \eqref{eq:soliton} and \eqref{eq:normalization}. Let $r$ be a regular value of $f$ and define the sub-level set $\Omega_r = \{ x \in M \, | \, f(x) \leq r \}$. If $c>0$ is a constant and $\Delta \ts R = 0$, then we have the following:
        \begin{enumerate}
            \item[(1.)] $\displaystyle \int_{\Omega_r}\langle\nabla \ts R, \nabla f\rangle \diff{V}_c = 0$,
            \item[(2.)] $\displaystyle\int_{\Omega_r} \ts R\langle\nabla \ts R, \nabla f\rangle \diff{V}_c = \frac{1}{c}\int_{\Omega_r} |\nabla \ts R|^2 \diff{V}_c - \frac{\ts e^{-c\, r}}{c}\int_{\Omega_r}|\nabla \ts R|^2 \diff{V}$,
            \item[(3.)] $\displaystyle\int_{\Omega_r}f\,\langle\nabla \ts R, \nabla f\rangle \diff{V}_c = 0$.
        \end{enumerate}
    \end{lemma}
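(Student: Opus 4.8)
The plan is to treat all three identities as weighted analogues of the $c=1$ statements in Lemma \ref{lemma.integralIdentities}, obtained by the same mechanism: integrate a suitable multiple of $\Delta \ts R$ against the weight $\ts e^{-cf}$, apply the divergence theorem, and discard the boundary contributions using the boundary facts already established in Lemma \ref{lemma.integralIdentities}(1) and (3). Throughout I will use that the outward unit normal on $\partial\Omega_r$ is $\nabla f/|\nabla f|$, that $f\equiv r$ there (so $\ts e^{-cf}$ is the constant $\ts e^{-cr}$ on the boundary), and the normalization $\ts R = f - |\nabla f|^2$.

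For (1), I would start from $0 = \int_{\Omega_r}\Delta \ts R\,\ts e^{-cf}\diff{V}$, valid since $\Delta \ts R = 0$. Integrating by parts moves the derivative off $\ts R$ and, using $\nabla \ts e^{-cf} = -c\,\ts e^{-cf}\nabla f$, produces the interior term $c\int_{\Omega_r}\langle\nabla \ts R,\nabla f\rangle\diff{V}_c$ together with a boundary term equal to $\ts e^{-cr}$ times the integral of Lemma \ref{lemma.integralIdentities}(1), which vanishes. Dividing by $c>0$ yields (1).

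Identity (2) is the substantive one. I would integrate $0 = \int_{\Omega_r}\ts R\,\Delta \ts R\,\ts e^{-cf}\diff{V}$ by parts against $\nabla(\ts R\,\ts e^{-cf}) = \ts e^{-cf}\nabla \ts R - c\,\ts R\,\ts e^{-cf}\nabla f$, which produces the interior terms $-\int_{\Omega_r}|\nabla \ts R|^2\diff{V}_c + c\int_{\Omega_r}\ts R\langle\nabla \ts R,\nabla f\rangle\diff{V}_c$ along with the boundary term $\ts e^{-cr}\int_{\partial\Omega_r}\ts R\,|\nabla f|^{-1}\langle\nabla \ts R,\nabla f\rangle\diff{S}$. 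The main obstacle is evaluating this boundary integral: substituting $\ts R = r - |\nabla f|^2$ splits it into a piece proportional to $\int_{\partial\Omega_r}|\nabla f|^{-1}\langle\nabla \ts R,\nabla f\rangle\diff{S}$, which vanishes by Lemma \ref{lemma.integralIdentities}(1), and the piece $-\int_{\partial\Omega_r}|\nabla f|\langle\nabla \ts R,\nabla f\rangle\diff{S}$, which equals $\int_{\Omega_r}|\nabla \ts R|^2\diff{V}$ by Lemma \ref{lemma.integralIdentities}(3). The boundary term thus collapses to $\ts e^{-cr}\int_{\Omega_r}|\nabla \ts R|^2\diff{V}$; collecting everything and solving for $c\int_{\Omega_r}\ts R\langle\nabla \ts R,\nabla f\rangle\diff{V}_c$ gives exactly (2) after dividing by $c$.

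For (3), I would rewrite the integrand as $f\langle\nabla \ts R,\nabla f\rangle\,\ts e^{-cf} = -\tfrac{1}{c}\langle f\nabla \ts R,\nabla \ts e^{-cf}\rangle$ and integrate by parts. Since $\Div(f\nabla \ts R) = \langle\nabla f,\nabla \ts R\rangle + f\Delta \ts R$ reduces to $\langle\nabla f,\nabla \ts R\rangle$ under $\Delta \ts R = 0$, and the resulting boundary term carries the factor $r\,\ts e^{-cr}$ times the vanishing integral of Lemma \ref{lemma.integralIdentities}(1), what remains is $\tfrac{1}{c}\int_{\Omega_r}\langle\nabla \ts R,\nabla f\rangle\diff{V}_c$, which is zero by part (1) just proved. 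This gives (3). The only place where any care is required is the boundary computation in (2); parts (1) and (3) are direct weighted transcriptions of the $c=1$ arguments already given.
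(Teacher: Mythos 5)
Your proposal is correct and follows essentially the same route as the paper: integrate $\Delta \ts R$, $\ts R\,\Delta \ts R$, and $f\langle\nabla \ts R,\nabla f\rangle$ against $\ts e^{-cf}$, apply the divergence theorem, and kill the boundary terms via Lemma \ref{lemma.integralIdentities}(1) and (3). The only cosmetic difference is in (2), where you re-derive the boundary identity by substituting $\ts R = r - |\nabla f|^2$ and invoking the second equality of Lemma \ref{lemma.integralIdentities}(3), whereas the paper cites its first equality directly; these are the same fact.
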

    \begin{proof}
        First we have:
        \begin{align*}
            0 &= \int_{\Omega_r} \Delta \ts R \ts e^{-cf}\diff{V}\\
            &= \int_{\partial \Omega_r} \frac{1}{|\nabla f|}\langle \nabla \ts R, \nabla f\rangle \ts e^{-cf} \diff{S} + c\int_{\Omega_r} \langle \nabla \ts R, \nabla f\rangle \ts e^{-cf}\,\diff{V}\\
            &= c\int_{\Omega_r} \langle\nabla \ts R,\nabla f\rangle \ts e^{-cf}\, \diff{V},
        \end{align*}
        showing (1). Next, we have:
        \begin{align*}
            0 &= \int_{\Omega_r} \ts R \Delta \ts R \,\ts e^{-cf}\diff{V}\\
            &= \int_{\partial \Omega_r} \frac{\ts R}{|\nabla f|}\langle \nabla \ts R, \nabla f \rangle \, \ts e^{-cf} \diff{S} - \int_{\Omega_r} \Big(|\nabla \ts R|^2 - c\, \ts R\, \langle \nabla \ts R, \nabla f \rangle \Big) \ts e^{-cf}\diff{V}.
        \end{align*}
        Hence, using Lemma~\ref{lemma.integralIdentities}(3),
        \begin{equation*}
            \int_{\Omega_r} \ts R\,\langle\nabla \ts R, \nabla f\rangle \, \ts e^{-cf}\diff{V} = \frac{1}{c}\int_{\Omega_r} |\nabla \ts R|^2 \ts e^{-cf}\diff{V} - \frac{\ts e^{-c\, r}}{c}\int_{\Omega_r}|\nabla \ts R|^2 \diff{V},
        \end{equation*}
        showing (2). And the last identity is:
        \begin{align*}
            \int_{\Omega_r} f&\langle\nabla \ts R, \nabla f\rangle \ts e^{-cf}\diff{V}\\
            &= -\frac{1}{c}\int_{\Omega_r} \langle f\, \nabla \ts R, \nabla (\ts e^{-cf})\rangle \diff{V}\\
            &= -\frac{1}{c}\Big[ \int_{\partial\Omega_r} \frac{f}{|\nabla f|} \langle\nabla \ts R, \nabla f\rangle \ts e^{-cf}\diff{S} - \int_{\Omega_r}\mathrm{div}(f\nabla \ts R)\ts e^{-cf}\diff{V} \Big]\\
            &= -\frac{1}{c}\Big[r \, \ts e^{-cr}\int_{\partial\Omega_r}\frac{1}{|\nabla f|} \langle\nabla \ts R, \nabla f\rangle \ts e^{-cf}\diff{S} - \int_{\Omega_r}\mathrm{div}(f\,\nabla \ts R)\ts e^{-cf}\diff{V} \Big]\\
            &= \frac{1}{c}\int_{\Omega_r} \Big(\langle \nabla \ts R, \nabla f\rangle + f\Delta \ts R\Big)\ts e^{-cf}\diff{V}\\
            &=0,
        \end{align*}
        showing (3).
    \end{proof}

    We first state a quick consequence of Bochner's formula.
    \begin{lemma}\label{lemma.riccibochner}
        Let $(M^4,f,g)$ be a complete gradient-shrinking Ricci soliton satisfying \eqref{eq:soliton} and \eqref{eq:normalization}. If $\Delta \ts R = 0$, then:
        \begin{equation*}
            2|\Rc|^2 = \ts R + \langle \nabla \ts R, \nabla f\rangle.
        \end{equation*}
    \end{lemma}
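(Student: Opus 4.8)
The plan is to derive the identity from Bochner's formula applied to the potential function $f$, drawing on the structural equations of the soliton. First I would record the standard consequences of the normalized equation $\Rc + \hess f = \frac12 g$ in dimension four: tracing gives $\Delta f = 2 - \ts R$, the contracted second Bianchi identity combined with the soliton equation yields Hamilton's identity $\nabla \ts R = 2\,\Rc(\nabla f)$ (equivalently $\langle \nabla \ts R, \nabla f\rangle = 2\,\Rc(\nabla f, \nabla f)$), and the chosen normalization gives $|\nabla f|^2 = f - \ts R$.

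Next I would apply Bochner's formula
\begin{equation*}
\tfrac12 \Delta |\nabla f|^2 = |\hess f|^2 + \langle \nabla f, \nabla \Delta f\rangle + \Rc(\nabla f, \nabla f),
\end{equation*}
and evaluate each term using the equations above. Substituting $\hess f = \frac12 g - \Rc$ gives $|\hess f|^2 = |\Rc|^2 - \ts R + 1$, where the constant $1 = \frac14 n$ and the cross term $-\ts R$ use $n=4$ and $\tr \Rc = \ts R$. Since $\Delta f = 2 - \ts R$, we get $\langle \nabla f, \nabla \Delta f\rangle = -\langle \nabla f, \nabla \ts R\rangle$. For the left-hand side, differentiating $|\nabla f|^2 = f - \ts R$ and invoking the hypothesis $\Delta \ts R = 0$ gives $\Delta |\nabla f|^2 = \Delta f - \Delta \ts R = 2 - \ts R$.

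Assembling these identities, the additive constants cancel and, after replacing $\Rc(\nabla f, \nabla f)$ by $\frac12\langle \nabla \ts R, \nabla f\rangle$, the equation collapses to $\frac{\ts R}{2} = |\Rc|^2 - \frac12 \langle \nabla \ts R, \nabla f\rangle$; multiplying through by two yields the claimed $2|\Rc|^2 = \ts R + \langle \nabla \ts R, \nabla f\rangle$. No step poses a genuine analytic obstacle—this is essentially a bookkeeping computation—so the only thing to watch is that the dimensional constant $n=4$ and the normalization $\rho=\frac12$ are precisely what force the constant terms to cancel; perturbing either would leave a residual term. As an alternative that sidesteps Bochner entirely, one may invoke the well-known drift-Laplacian identity $\Delta_f \ts R = \ts R - 2|\Rc|^2$ for shrinkers with $\rho = \frac12$, where $\Delta_f = \Delta - \langle \nabla f, \nabla \cdot\rangle$; setting $\Delta \ts R = 0$ gives $-\langle \nabla \ts R, \nabla f\rangle = \ts R - 2|\Rc|^2$, which is the assertion.
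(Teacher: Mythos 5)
Your proposal is correct and follows essentially the same route as the paper: Bochner's formula applied to $f$, combined with the soliton identities $\Delta f = 2 - \ts R$, $\hess f = \tfrac12 g - \Rc$, $\Rc(\nabla f,\nabla f) = \tfrac12\langle\nabla \ts R,\nabla f\rangle$, and the normalization $|\nabla f|^2 = f - \ts R$, with the dimensional constants cancelling exactly as in the paper's computation. Your closing observation that the lemma is just the shrinker identity $\Delta_f \ts R = \ts R - 2|\Rc|^2$ specialized to $\Delta \ts R = 0$ is also valid and arguably the quickest way to see it.
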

    \begin{proof}
        By Bochner's formula:
        \begin{align*}
            \frac{1}{2}\Delta |\nabla f|^2 &= \langle \nabla (\Delta f), \nabla f\rangle + |\Hess(f)|^2 + \Rc(\nabla f, \nabla f)\\
            \frac{1}{2}\Delta (f - \ts R) &= \langle \nabla(2 - \ts R),\nabla f\rangle + |\Hess(f)|^2 + \frac{1}{2}\langle \nabla \ts R, \nabla f \rangle\\
            \frac{1}{2}(2-\ts R)&= -\frac{1}{2}\langle \nabla \ts R, \nabla f\rangle + \left|\frac{1}{2}g - \Rc\right|^2\\
            1-\frac{\ts R}{2} &= -\frac{1}{2}\langle \nabla \ts R, \nabla f\rangle + 1 - \ts R + |\Rc|^2\\
            \frac{\ts R}{2} &= -\frac{1}{2}\langle \nabla \ts R, \nabla f\rangle + |\Rc|^2.
        \end{align*}
        From here our claim clearly follows.
    \end{proof}

    \begin{lemma}\label{lemma.uIntegral}
        Let $(M^4,g,f)$ be a complete gradient-shrinking Ricci soliton satisfying \eqref{eq:soliton} and \eqref{eq:normalization}. Assume that $\Delta \ts R \equiv 0$, $r > 0$ is a regular value of $f$, and $\Omega_r = \{f\leq r\}$. Then
        \[
            \int_{\Omega_r} \ts U(\nabla f, \nabla f)\diff{V}_c =\frac14 \int_{\Omega_r}\big(\ts R^2 - 2\,|\Rc|^2\big) \, |\nabla f|^2 \diff{V}_c.
        \]
        Moreover, this can be rewritten as
        \[
            \int_{\Omega_r} \ts U(\nabla f, \nabla f)\diff{V}_c = \frac14 \int_{\Omega_r} \big( \ts R^2\,|\nabla f|^2 - \ts R \, |\nabla f|^2 + \tfrac1c |\nabla \ts R|^2\big)\diff{V}_c - \frac{\ts e^{-cr}}{4c}\int_{\Omega_r} |\nabla \ts R|^2 \diff{V}.
        \]
    \end{lemma}
    \begin{proof}
        One can easily check that on a soliton we have
        \begin{equation*}
            \ts U_{ij} = \ts R_{ij} + \nabla_k \ts R_{ij} \nabla_k f - \frac{1}{2}|\mathrm{Rc}|^2 g_{ij} - \ts R\, \ts R_{ij} - \frac{1}{2}\Delta \ts R \, g_{ij} + \frac{\ts R^2}{4}g_{ij},
        \end{equation*}
        as well as the soliton identities
        \begin{enumerate}
            \item[(a)] $\mathrm{Rc}(\nabla f) = \frac{1}{2}\nabla \ts R \implies \mathrm{Rc}(\nabla f, \nabla f) = \frac{1}{2}\langle \nabla \ts R, \nabla f \rangle$,
            \item[(b)] $\Delta f = 2-\ts R$,
            \item[(c)] $\nabla^2f(\nabla f) = \frac{1}{2}\nabla f - \frac{1}{2}\nabla \ts R$.
        \end{enumerate}
        The second term in our expression for $\ts U$ is simplified by looking at the following divergence:
        \begin{align*}
            \nabla_k (\ts R_{ij}& \nabla_i f\nabla_j f \nabla_k f \, \ts e^{-cf})\\
            &= \nabla_k \ts R_{ij} \nabla_i f\nabla_j f\nabla_k f \ts e^{-cf} + 2\, \ts R_{ij}\nabla_i f\nabla_j \nabla_k f (\nabla_k f) \ts e^{-cf} \\
            &\quad+ \Delta f \, \ts R_{ij} \nabla_i f\nabla_j f \ts e^{-cf}- c|\nabla f|^2 \ts R_{ij} \nabla_i f \nabla_j f\, \ts e^{-cf}.
        \end{align*}
        Now by the divergence theorem we have
        \begin{align*}
            \int_{\Omega_r} &\nabla_k \ts R_{ij}\nabla_i f\nabla_j f\nabla_k f \, \ts e^{-cf}\diff{V}\\
            & = \int_{\Omega_r}\mathrm{div}\left(\frac{1}{2}\langle \nabla \ts R,\nabla f\rangle \nabla_k f \, \ts e^{-cf}\right)\diff{V} + \frac{1}{2} \int_{\Omega_r} \ts R\langle \nabla \ts R, \nabla f\rangle \, \ts e^{-cf}\diff{V} \\
            &\quad- \frac{1}{2}\int_{\Omega_r} |\nabla \ts R|^2 \, \ts e^{-cf}\diff{V}+ \frac{\ts e^{-cr}}{2}\int_{\Omega_r} |\nabla \ts R|^2 \diff{V} \\
            &\quad- \int_{\Omega_r}\langle \nabla \ts R, \frac{1}{2}\nabla f - \frac{1}{2}\nabla \ts R\rangle \, \ts e^{-cf}\diff{V}\\
            & = \frac{1}{2}\int_{\partial \Omega_r} |\nabla f|\langle \nabla \ts R, \nabla f \rangle \, \ts e^{-cf}\diff{S} + \frac{1}{2c}\int_{\Omega_r}|\nabla \ts R|^2 \, \ts e^{-cf}\diff{V}\\
            &\quad - \frac{\ts e^{-cr}}{2c}\int_{\Omega_r}|\nabla \ts R|^2 \diff{V} + \frac{\ts e^{-cr}}{2}\int_{\Omega_r}|\nabla \ts R|^2 \diff{V}\\
            &=\int_{\Omega_r} \ts R\, \mathrm{Rc} (\nabla f, \nabla f) \diff{V}_c.
        \end{align*}
        So the weighted integral of $\ts U_{ij}$ becomes
        \begin{equation*}
            \int_{\Omega_r} \ts U(\nabla f, \nabla f)\diff{V}_c = \frac{1}{4}\int_{\Omega_r} (\ts R^2-2|\Rc|^2)|\nabla f|^2 \diff{V}_c.
        \end{equation*}
        Now transforming using the Bochner formula from Lemma~\ref{lemma.riccibochner}:
        \begin{align*}
            \int_{\Omega_r} &\ts U(\nabla f, \nabla f)\diff{V}_c\\
            &= \frac14\int_{\Omega_r} \Big( \ts R^2|\nabla f|^2 - \ts R|\nabla f|^2 + \ts R\langle \nabla \ts R, \nabla f\rangle \Big) \diff{V}_c\\
            &= \frac{1}{4}\int_{\Omega_r} \Big( \ts R^2|\nabla f|^2 - \ts R|\nabla f|^2 + \frac{1}{c}|\nabla \ts R|^2\Big) \diff{V}_c - \frac{\ts e^{-cr}}{4c}\int_{\Omega_r}|\nabla \ts R|^2\diff{V}.
        \end{align*}
    \end{proof}

    The issue is that $\ts R^2 - 2\,|\Rc|^2$ can change sign over $M$, so
    \[
        \int_M \big(\ts R^2-2\,|\Rc|^2\big)\,|\nabla f|^2\diff{V}_c = 0
    \]
    does not yield obvious rigidity. For comparison, we state the analogous identity for $\ts V(\nabla f, \nabla f)$ under $\diff{V}_c$. The contrast with Lemma~\ref{lemma.uIntegral} is instructive: for $c\geq 1$ the integrand for $\ts V$ acquires a nonnegative term, while the integrand for $\ts U$ remains sign-indefinite for any $c>0$. This further highlights the subtleties in the $\ts U =0$ case.

    \begin{lemma}\label{lemma.alteredvintegral}
        Let $(M^4,f,g)$ be a complete gradient-shrinking Ricci soliton satisfying \eqref{eq:soliton} and \eqref{eq:normalization}. Let $r>0$ be a regular value of $f$ and $\Omega_r = \{f\leq r\}$. If $\Delta \ts R = 0$ and $c>0$, then
        \[
            \int_{\Omega_r} \ts V(\nabla f, \nabla f)\diff{V}_c = -\int_{\Omega_r}\Big(\frac{1}{4}\ts R^2|\nabla f|^2 + \frac{3(c-1)}{2c}|\nabla \ts R|^2\Big)\diff{V}_c + \frac{(4c-3)\ts e^{-cr}}{2c}\int_{\Omega_r}|\nabla \ts R|^2 \diff{V},
        \]
        and in particular
        \begin{equation*}
            \int_M \ts V(\nabla f,\nabla f)\diff{V}_c = -\int_{M}\Big(\frac14\ts R^2|\nabla f|^2 + \frac{3(c-1)}{2c}|\nabla \ts R|^2\Big)\diff{V}_c.
        \end{equation*}
    \end{lemma}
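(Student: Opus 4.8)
The plan is to contract $\ts V_{ij}$ against $\nabla f\otimes\nabla f$, reduce the result to a short list of scalar integrals, and then dispatch each of them with the weighted identities already in hand. Contracting \eqref{eq.vtensor} and using $\Delta \ts R=0$ together with the standard soliton identity $\Rc(\nabla f,\nabla f)=\tfrac12\langle\nabla \ts R,\nabla f\rangle$ (identity (a) in the proof of Lemma \ref{lemma.uIntergral}), I get
\begin{equation*}
    \ts V(\nabla f,\nabla f)=-\hess \ts R(\nabla f,\nabla f)+\tfrac12 \ts R\,\langle\nabla \ts R,\nabla f\rangle-\tfrac14 \ts R^2|\nabla f|^2 .
\end{equation*}
The last two terms are already controlled by Lemma \ref{lemma.alteredIntegralIdentities}, so the only genuinely new ingredient is the Hessian term. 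Indeed, this whole computation is the $c$-weighted analogue of the $c=1$ argument used in the proof of Theorem \ref{theorem.vanishingBasisTensors}(i), so the natural first move is to upgrade Lemma \ref{lemma.hessianRintegral} to the measure $\diff{V}_c$.

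To produce that weighted Hessian identity I would start from the pointwise decomposition
\begin{equation*}
    \hess \ts R(\nabla f,\nabla f)=\big\langle\nabla\langle\nabla \ts R,\nabla f\rangle,\nabla f\big\rangle-\hess f(\nabla \ts R,\nabla f),
\end{equation*}
and replace $\hess f(\nabla \ts R,\nabla f)=\tfrac12\langle\nabla \ts R,\nabla f\rangle-\tfrac12|\nabla \ts R|^2$ via the soliton identity $\hess f(\nabla f)=\tfrac12\nabla f-\tfrac12\nabla \ts R$ (identity (c) of Lemma \ref{lemma.uIntergral}) and symmetry of the Hessian. Writing $\phi=\langle\nabla \ts R,\nabla f\rangle$, I would integrate $\langle\nabla\phi,\nabla f\rangle\,\ts e^{-cf}$ by parts against the weighted measure. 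Since $\nabla f\,\ts e^{-cf}=-\tfrac1c\nabla(\ts e^{-cf})$, the divergence theorem returns a boundary integral over $\partial\Omega_r$ plus a bulk integral weighted by $c|\nabla f|^2-\Delta f$; invoking the normalization $|\nabla f|^2=f-\ts R$ and $\Delta f=2-\ts R$ turns this weight into $cf-2+(1-c)\ts R$.

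At this stage every piece is a known integral. The $cf$ and constant contributions vanish by Lemma \ref{lemma.alteredIntegralIdentities}(3) and (1) respectively, while the $\ts R$ contribution is precisely Lemma \ref{lemma.alteredIntegralIdentities}(2), which injects both $\int_{\Omega_r}|\nabla \ts R|^2\diff{V}_c$ and the correction term $\ts e^{-cr}\int_{\Omega_r}|\nabla \ts R|^2\diff{V}$. The boundary integral simplifies because $\ts e^{-cf}\equiv \ts e^{-cr}$ is constant on the regular level set $\partial\Omega_r$, so after pulling out this constant it collapses through Lemma \ref{lemma.integralIdentities}(3) to $-\ts e^{-cr}\int_{\Omega_r}|\nabla \ts R|^2\diff{V}$. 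Substituting the resulting weighted Hessian identity back into $\ts V(\nabla f,\nabla f)$, and applying Lemma \ref{lemma.alteredIntegralIdentities}(2) once more to the $\ts R\,\langle\nabla \ts R,\nabla f\rangle$ term, collects everything into the stated finite-$r$ formula. The second formula then follows on letting $r\to\infty$: the factor $\ts e^{-cr}$ multiplying $\int_{\Omega_r}|\nabla \ts R|^2\diff{V}$ is exactly the quantity driven to zero in Lemma \ref{lemma.gradRvanishes}, whose hypotheses are met via the finiteness in Lemma \ref{lemma.gradRbounded}, so the boundary-type term drops out.

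The main obstacle is purely the bookkeeping of the $c$-dependent coefficients: the weight $c|\nabla f|^2-\Delta f$, the factor $1/c$ generated each time Lemma \ref{lemma.alteredIntegralIdentities}(2) is invoked, and the two distinct appearances of $\ts e^{-cr}\int_{\Omega_r}|\nabla \ts R|^2\diff{V}$ (one from the boundary term, one from Lemma \ref{lemma.alteredIntegralIdentities}(2)) must all be tracked so that they assemble into the stated coefficients $\tfrac{3(c-1)}{2c}$ and $\tfrac{4c-3}{2c}$; specializing to $c=1$ should reproduce the identity used in the proof of Theorem \ref{theorem.vanishingBasisTensors}(i), which I would use as a consistency check. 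A secondary point requiring care is the justification of the $r\to\infty$ limit itself, which again rests on the integrability furnished by Lemma \ref{lemma.gradRbounded} to guarantee that each bulk integral converges and that no residual boundary flux survives.
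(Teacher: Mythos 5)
Your plan retraces the paper's own proof of this lemma step for step: the same decomposition of $\ts V(\nabla f,\nabla f)$ using $\Delta \ts R=0$ and $\Rc(\nabla f,\nabla f)=\tfrac12\langle\nabla \ts R,\nabla f\rangle$, the same integration by parts on the Hessian term, and the same appeals to Lemma \ref{lemma.alteredIntegralIdentities}, Lemma \ref{lemma.integralIdentities}(3), and Lemmas \ref{lemma.gradRbounded}--\ref{lemma.gradRvanishes} for the $r\to\infty$ limit. Your intermediate computation is also correct: with $\phi=\langle\nabla\ts R,\nabla f\rangle$, the bulk weight produced by the integration by parts is indeed $c|\nabla f|^2-\Delta f = cf-2+(1-c)\ts R$. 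The gap is in the final step, where you assert without computing that the pieces ``assemble into the stated coefficients $\tfrac{3(c-1)}{2c}$ and $\tfrac{4c-3}{2c}$.'' They do not. Pushing your weight through Lemma \ref{lemma.alteredIntegralIdentities}(1)--(3) and Lemma \ref{lemma.integralIdentities}(3) gives
\begin{equation*}
\int_{\Omega_r}\hess \ts R(\nabla f,\nabla f)\diff{V}_c = -\frac{\ts e^{-cr}}{c}\int_{\Omega_r}|\nabla\ts R|^2\diff{V} + \frac{2-c}{2c}\int_{\Omega_r}|\nabla\ts R|^2\diff{V}_c,
\end{equation*}
and substituting this together with Lemma \ref{lemma.alteredIntegralIdentities}(2) into the decomposition of $\ts V(\nabla f,\nabla f)$ yields
\begin{equation*}
\int_{\Omega_r}\ts V(\nabla f,\nabla f)\diff{V}_c = -\int_{\Omega_r}\Big(\frac14\ts R^2|\nabla f|^2 + \frac{1-c}{2c}|\nabla\ts R|^2\Big)\diff{V}_c + \frac{\ts e^{-cr}}{2c}\int_{\Omega_r}|\nabla\ts R|^2\diff{V},
\end{equation*}
with coefficients $\tfrac{1-c}{2c}$ and $\tfrac{1}{2c}$ rather than $\tfrac{3(c-1)}{2c}$ and $\tfrac{4c-3}{2c}$.

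The source of the discrepancy is a sign flip: the coefficient of $\int \ts R\,\phi\,\diff{V}_c$ coming from your (correct) weight is $(1-c)$, whereas the paper's proof carries it forward as $(c-1)$ in the passage between its second and third displays; the stated coefficients of the lemma are exactly what that flipped sign produces, so the statement you are asked to prove is not reachable by a correct execution of this computation. Be aware also that the consistency check you propose is powerless here: the two candidate right-hand sides differ by
\begin{equation*}
\frac{2(c-1)}{c}\Big[\ts e^{-cr}\int_{\Omega_r}|\nabla\ts R|^2\diff{V} - \int_{\Omega_r}|\nabla\ts R|^2\diff{V}_c\Big],
\end{equation*}
which vanishes identically at $c=1$, so agreement with Lemma \ref{lemma.hessianRintegral} and with \eqref{eq.VweightedIntegral} cannot distinguish them; a genuine check requires either redoing the bookkeeping at a second value of $c$ or tracking the $(1-c)$ term symbolically, as above. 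To complete your write-up you must therefore carry out the coefficient arithmetic explicitly, and when you do, you should report the corrected identity rather than the stated one. (This does not disturb the paper's main theorems, whose proofs rely on Lemma \ref{lemma.uIntergral}, Lemma \ref{lemma.weightedBachIntegral}, and the $c=1$ identity \eqref{eq.VweightedIntegral}, not on this lemma.)
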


    \begin{proof}
        We compute the following divergence:
        \begin{align*}
            \nabla_i &(\nabla_j \ts R \nabla_i f\nabla_j f\,\ts e^{-cf}) \\
            &= \nabla_i\nabla_j \ts R \nabla_i f\nabla_j f \ts e^{-cf} + \Delta f \langle\nabla \ts R, \nabla f\rangle \ts e^{-cf}\\
            &\quad+\nabla^2f(\nabla f, \nabla \ts R)\ts e^{-cf} - c|\nabla f|^2\langle\nabla \ts R,\nabla f\rangle \ts e^{-cf}.
        \end{align*}
        Hence the integral of $\nabla^2 \ts R(\nabla f, \nabla f)\,\ts e^{-cf}$ is:
        \begin{align*}
            \int_{\Omega_r} & \nabla^2 \ts R(\nabla f,\nabla f)\,\ts e^{-cf} \diff{V}\\
            &= \int_{\Omega_r} \nabla_i(\langle\nabla \ts R,\nabla f\rangle \nabla_i f \,\ts e^{-cf})\diff{V} + \int_{\Omega_r}(\ts R-2)\langle\nabla \ts R, \nabla f\rangle\, \ts e^{-cf} \diff{V}\\
            &\quad -\int_{\Omega_r} \Big( \frac{1}{2}\langle\nabla \ts R,\nabla f\rangle - \frac{1}{2}|\nabla \ts R|^2\Big) \ts e^{-cf}\diff{V} + c\int_{\Omega_r}(f-\ts R)\langle\nabla \ts R, \nabla f\rangle \ts e^{-cf}\diff{V}\\
            &=\int_{\partial\Omega_r}|\nabla f|\langle\nabla \ts R, \nabla f\rangle \ts e^{-cf}\diff{S}\\
            &\quad+ (c-1)\int_{\Omega_r} \ts R\langle\nabla \ts R,\nabla f\rangle \ts e^{-cf}\diff{V} + \frac{1}{2}\int_{\Omega_r}|\nabla \ts R|^2 \ts e^{-cf}\diff{V}\\
            &= -\ts e^{-cr}\Big(\frac{c-1}{c}+1\Big)\int_{\Omega_r}|\nabla \ts R|^2 \diff{V} + \Big(\frac{c-1}{c}+\frac{1}{2}\Big)\int_{\Omega_r}|\nabla \ts R|^2 \ts e^{-cf} \diff{V}.
        \end{align*}
        Therefore:
        \begin{align*}
            \int_{\Omega_r} &\ts V(\nabla f,\nabla f)\, \ts e^{-cf}\diff{V}\\
            &= \int_{\Omega_r}\nabla^2 \ts R(\nabla f,\nabla f)\, \ts e^{-cf}\diff{V} +\frac{1}{2}\int_{\Omega_r} \ts R\langle\nabla \ts R,\nabla f\rangle \, \ts e^{-cf}\diff{V} - \frac{1}{4}\int_{\Omega_r}\ts R^2|\nabla f|^2\, \ts e^{-cf}\diff{V}\\
            &= -\int_{\Omega_r}\Big(\frac{1}{4}\ts R^2|\nabla f|^2 + \frac{3(c-1)}{2c}|\nabla \ts R|^2\Big)\, \ts e^{-cf}\diff{V} + \frac{(4c-3)\ts e^{-cr}}{2c}\int_{\Omega_r}|\nabla \ts R|^2 \diff{V}.
        \end{align*}
        Taking the limit $r\rightarrow \infty$ gives the stated global identity.
    \end{proof}

    \section{Proof of Theorem~\ref{theorem.vanishingU}}
    \begin{proof}
    Assume that $\ts U_{ij} = 0$. By Lemma~\ref{lemma.tracesUV}, $\Delta \ts R = 0$. Let $p$ be the point where $f$ attains its minimum value $f_{\min}$. Then $\nabla f(p) = 0$, so by \eqref{eq:normalization}, $\ts R(p) = f_{\min}$. Using $\ts U = 0$ at $p$ with $\Delta \ts R = 0$:
    \[
        \Rc(p)\big(1-\ts R(p)\big) + \tfrac14\ts R(p)\big(\ts R(p)-1\big)g(p) = 0,
    \]
    which factors as
    \[
        \big(1 - \ts R(p)\big)\!\left(\Rc(p) - \tfrac14 \ts R(p)\,g(p)\right) = 0.
    \]
    If $\ts R(p)\neq 1$ then $\Rc(p) = \tfrac14 \ts R(p)g(p)$, which gives
    \[
        |\Rc(p)|^2 = \tfrac{1}{4}\ts R^2(p).
    \]
    Since $\nabla f(p) = 0$, Lemma~\ref{lemma.riccibochner} gives $2|\Rc(p)|^2 = \ts R(p)$. Substituting:
    \[
        \tfrac{1}{2}\ts R^2(p) = \ts R(p) \implies \ts R(p)\big(\ts R(p)-2\big) = 0.
    \]
    Hence $\ts R(p)\in\{0,2\}$. Together with the $\ts R(p)=1$ case, we have $f_{\min} = \ts R(p)\in\{0,1,2\}$.

    \textit{Case $f_{\min}=0$.} We have $\ts R(p)=0$, and the manifold is isometric to the Gaussian soliton by \cite{pigolarimoldi:2011}.

    \textit{Case $f_{\min}=2$.} Using the soliton equation,
    \[
        \ts R^2 - 2|\Rc|^2 = \ts R^2-2\left| \tfrac12 g - \nabla^2 f \right|^2 = \ts R^2-2\ts R + 2 - 2|\nabla^2 f|^2.
    \]
    At the minimum $p$ with $f_{\min}=2$, we have $\Delta f(p) = 2-\ts R(p)=0$. Since $p$ is a minimum, $\nabla^2 f(p)\geq 0$, and $\Delta f(p)=0$ forces $\nabla^2 f(p)=0$. Thus:
    \[
        \ts R^2(p)-2\ts R(p)+2-2|\nabla^2 f(p)|^2 = 4-4+2-0 = 2 > 0.
    \]
    By continuity, there exist $r>2$ and $\epsilon>0$ such that $\ts R^2-2\ts R+2-2|\nabla^2 f|^2 \geq \epsilon > 0$ on $\Omega_r$. Therefore, under $\ts U \equiv 0$,
    \[
        \int_{\Omega_r} (\ts R^2 - 2|\Rc|^2)|\nabla f|^2 \, \ts e^{-f} \diff{V} = 0
    \]
    implies $|\nabla f|^2 \equiv 0$ on $\Omega_r$, hence $\ts R = f - |\nabla f|^2 = 2$ on $\Omega_r$. By unique continuation for harmonic functions, $\ts R=2$ on all of $M$, and the soliton is Einstein by \cite{pigolarimoldi:2011}.

    For the converse, if $M$ is Einstein then $\ts B\equiv 0$ (as $\ts B$ vanishes on Einstein manifolds \cite{caochen:2013}) and $\ts V\equiv 0$ by Theorem~\ref{theorem.vanishingV}, so $\ts U = 2\ts B - \tfrac13\ts V \equiv 0$. If $M$ is Gaussian then all curvature vanishes and $\ts U\equiv 0$.

    \textit{$^*$Case $f_{\min}=1$.} At a minimum point $p$ the $\ts U$-tensor vanishes independently of any global assumption, and $\mathrm{V}$ \emph{cannot} vanish. Coupled with the sign issue for the $\mathrm{U}$-vanishing integral identity, this case remains open; see Conjecture~\ref{conj.cylinder}.
    \end{proof}

    \section{Variational Origins of U and V}
    The classical Bach tensor arises naturally in a number of ways. One approach is to start with the conformal Einstein equations and then study integrability conditions via divergence operations. A necessary condition to be conformally Einstein is for the Bach tensor $\mathrm{B}_{ij}$ to vanish. The approach taken by R.\ Bach \cite{bach:1921} was to consider the first variation of the quadratic Riemannian functional
    \[
        \mathrm{S}_W = \int |\mathrm{W}|^2\,\diff{V}_g.
    \]
    As seen in A.\ Besse \cite{besse:1987}, one has
    \[
        \mathrm{B} \,\,\propto\,\,  \mathrm{grad}\left(\int |\mathrm{W}|^2 \diff{V}_g\right),
    \]
    so the Bach tensor is the gradient of a Riemannian functional whose energy depends on how much a metric deviates from being conformally flat. In this section, we identify Bach-like tensors as the Euler--Lagrange gradients of natural quadratic curvature functionals. We work in the standard transverse-traceless (TT) gauge for the second variation, imposing for a variation $h$:
    \[
        \mathrm{(TT)\,gauge:}\quad\mathrm{Div}_g(h) = 0 \quad \mathrm{and} \quad \tr(h) = 0.
    \]
    Recall the Lichnerowicz Laplacian acting on symmetric $2$-tensors:
    \[
        (\Delta_L \, h)_{ij} = (\nabla^* \nabla \, h)_{ij} + 2\,\ts R_{ikjl}\,h_{kl} - \ts R_{ik} \, h_{kj} - \ts R_{jk} \, h_{ki},
    \]
    where $\nabla^*\nabla \, h = -\nabla^k\nabla_k \, h$.

    \begin{lemma}\label{lem:var-origin}
    In dimension $4$, define
    \[
    \mathcal{F}_{\alpha,\beta}(g)
    := -\,\alpha \int_M |\mathrm{W}_g|^2 \, \mathrm{d}V_g
    \;+\;\frac{\alpha/3 - \beta}{2}\int_M \mathrm{R}_g^2 \, \mathrm{d}V_g.
    \]
    Then the $L^2$-gradient satisfies $\mathrm{grad}\,\mathcal{F}_{\alpha,\beta} = \alpha\, \ts U + \beta\, \ts V$. In particular, the Euler--Lagrange equation for $\mathcal{F}_{\alpha,\beta}$ is $\mathfrak{B}_{ij} = \alpha\, \ts U_{ij}+\beta \, \ts V_{ij} =0$.
    \end{lemma}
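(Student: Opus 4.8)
The plan is to exploit the linearity of the $L^2$–gradient over the two building-block functionals appearing in $\mathcal{F}_{\alpha,\beta}$, so that the entire computation reduces to identifying $\mathrm{grad}\big(\int_M \ts R^2\,\mathrm{d}V\big)$ and $\mathrm{grad}\big(\int_M |\mathrm{W}|^2\,\mathrm{d}V\big)$ separately and then taking the prescribed combination. Writing
\[
\mathrm{grad}\,\mathcal{F}_{\alpha,\beta} = -\alpha\,\mathrm{grad}\Big(\int_M|\mathrm{W}|^2\,\mathrm{d}V\Big) + \frac{\alpha/3-\beta}{2}\,\mathrm{grad}\Big(\int_M \ts R^2\,\mathrm{d}V\Big),
\]
I would aim to prove the two normalizations $\mathrm{grad}\big(\int \ts R^2\,\mathrm{d}V\big) = -2\,\ts V$ and $\mathrm{grad}\big(\int |\mathrm{W}|^2\,\mathrm{d}V\big) = -2\,\ts B$. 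Granting these and invoking the decomposition $\ts B = \tfrac12 \ts U + \tfrac16 \ts V$ from \eqref{eq.bachliketensor} (equivalently $2\ts B = \ts U + \tfrac13 \ts V$), the combination collapses: the Weyl term contributes $-\alpha(-2\ts B) = \alpha\,\ts U + \tfrac{\alpha}{3}\ts V$, the scalar term contributes $\tfrac{\alpha/3-\beta}{2}(-2\ts V) = (\beta-\tfrac{\alpha}{3})\ts V$, and the two $\ts V$-pieces add to $\beta\,\ts V$, leaving exactly $\alpha\,\ts U + \beta\,\ts V$.

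The first normalization is routine and self-contained. I would compute the first variation of $\int \ts R^2\,\mathrm{d}V$ in a direction $h$ using the standard formulas $\delta \ts R = -\lap(\tr h) + \Div\,\Div\,h - \inner{\Rc}{h}$ (with $\lap = \tr \hess$, which equals $-\nabla^*\nabla$ in the paper's convention) and $\delta(\mathrm{d}V) = \tfrac12(\tr h)\,\mathrm{d}V$, integrate by parts twice to move all derivatives off $h$, and read off the gradient as $2\hess \ts R - 2(\lap \ts R)g - 2\ts R\,\Rc + \tfrac12 \ts R^2 g$. A term-by-term comparison with \eqref{eq.vtensor} shows this equals $-2\ts V$; the normalization can be double-checked against the conformal scaling $g\mapsto(1+t)g$, under which $\int \ts R^2\,\mathrm{d}V$ is invariant in dimension $4$.

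The crux, and the main obstacle, is the second normalization $\mathrm{grad}\big(\int |\mathrm{W}|^2\,\mathrm{d}V\big) = -2\,\ts B$. It is classical (R. Bach \cite{bach:1921}, A. Besse \cite{besse:1987}) that in dimension four the Weyl functional is critical precisely at Bach-flat metrics and that its $L^2$-gradient is a universal constant multiple $\kappa\,\ts B$ of the Bach tensor; the whole difficulty is pinning down $\kappa$, including its sign, in the present conventions. I would fix $\kappa$ without carrying out the full Weyl variation by using the pointwise identity $|\mathrm{W}|^2 = |\Rm|^2 - 2|\Rc|^2 + \tfrac13 \ts R^2$ together with the four-dimensional Gauss–Bonnet theorem, which renders $\int(|\Rm|^2 - 4|\Rc|^2 + \ts R^2)\,\mathrm{d}V$ a topological invariant with vanishing gradient; this eliminates $\mathrm{grad}\big(\int|\Rm|^2\big)$ and reduces $\mathrm{grad}\big(\int|\mathrm{W}|^2\big)$ to a combination of $\mathrm{grad}\big(\int|\Rc|^2\big)$ and the already-known $\mathrm{grad}\big(\int \ts R^2\big)$. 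One then needs only the single additional first-variation formula for $\int|\Rc|^2\,\mathrm{d}V$, whose gradient involves $\lap\,\Rc$, $\hess \ts R$, and the curvature term $\ts R_{ipjq}\ts R^{pq}$; here commuting covariant derivatives via the contracted second Bianchi identity and tracking every sign against the stated convention $\nabla^*\nabla = -\nabla^k\nabla_k$ is the delicate step. The outcome should be $\kappa = -2$, and a useful internal check is that once the $\ts U$-coefficient forces $\kappa = -2$, the $\ts V$-coefficient comes out to $\beta$ automatically, so a single correct term validates the entire constant.

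Finally, the Euler–Lagrange statement is immediate: a metric is critical for $\mathcal{F}_{\alpha,\beta}$ if and only if $\mathrm{grad}\,\mathcal{F}_{\alpha,\beta} = 0$, which by the above is exactly $\mathfrak{B}_{ij} = \alpha\,\ts U_{ij} + \beta\,\ts V_{ij} = 0$.
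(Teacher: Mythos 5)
Your proposal is correct and takes essentially the same approach as the paper: both proofs reduce the lemma to the two normalizations $\mathrm{grad}\int \ts R^2\diff{V}=-2\,\ts V$ and $\mathrm{grad}\int|\ts W|^2\diff{V}=-2\,\ts B$, and then perform the identical linear algebra using $\ts B=\tfrac12\,\ts U+\tfrac16\,\ts V$ (your remark that matching the $\ts U$-coefficient forces the $\ts V$-coefficient automatically is exactly the consistency built into this decomposition). The only difference is one of completeness: the paper cites these two variational identities as standard (Besse), whereas you sketch their derivations---a routine direct computation for the $\ts R^2$ term and the classical Gauss--Bonnet reduction to the variation of $\int|\Rc|^2\diff{V}$ for the Weyl term---which is precisely the route by which the cited identities are established in the literature.
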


    \begin{proof}
    In dimension four:
    \[
        \delta \!\int |\ts W|^2 \diff{V}_g= -2\!\int \langle \ts B,\delta g\rangle\diff{V}_g \quad \mathrm{and} \quad \delta\!\int \ts R^2\diff{V}_g = -2\!\int \langle \ts V,\delta g\rangle\diff{V}_g.
    \]
    Using $\ts B=\tfrac12 \,\ts U+\tfrac16\,\ts V$:
    \[
    \delta \mathcal{F}_{\alpha,\beta}
    = \int \Big\langle \alpha\!\left(\tfrac12 \ts U+\tfrac16 \ts V\right)
    \;-\; \tfrac{\alpha/3-\beta}{2}\cdot 2 \ts V,\, \delta g \Big\rangle \diff{V}_g
    = \int \langle \alpha \ts U+\beta \ts V,\, \delta g\rangle \diff{V}_g.
    \qedhere
    \]
    \end{proof}

    \begin{corollary}\label{cor:U-V-variations}
    \[
    \ts V \;=\; -\tfrac12\,\mathrm{grad}\!\left(\int_M \ts R^2\, \diff{V}_g\right),
    \qquad
    \ts U \;=\; -\,\mathrm{grad}\!\left(\int_M \big(|\ts W|^2-\tfrac16 \ts R^2\big)\, \diff{V}_g\right).
    \]
    \end{corollary}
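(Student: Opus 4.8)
The plan is to read off both formulas as special cases of Lemma~\ref{lem:var-origin}, which already identifies $\mathrm{grad}\,\mathcal{F}_{\alpha,\beta} = \alpha\,\ts U + \beta\,\ts V$ for every $(\alpha,\beta)$. Since the right-hand side is a free linear combination of the two basis tensors, I can isolate $\ts V$ and $\ts U$ individually by choosing the two coordinate directions $(\alpha,\beta) = (0,1)$ and $(\alpha,\beta) = (1,0)$, and then simply evaluating the defining functional $\mathcal{F}_{\alpha,\beta}$ at those points. No new variational computation is needed: the substantive work, namely the two identities $\delta\!\int|\ts W|^2 = -2\!\int\langle \ts B,\delta g\rangle$ and $\delta\!\int \ts R^2 = -2\!\int\langle \ts V,\delta g\rangle$, is already contained in the lemma, so the corollary is a matter of specialization rather than fresh analysis.

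First I would handle $\ts V$. Setting $\alpha = 0$ and $\beta = 1$ in the lemma gives $\mathrm{grad}\,\mathcal{F}_{0,1} = \ts V$. Evaluating the definition of $\mathcal{F}_{\alpha,\beta}$ at this point, the Weyl term carries the coefficient $-\alpha = 0$ and so drops out, while the scalar term carries $(\alpha/3 - \beta)/2 = -\tfrac12$; hence $\mathcal{F}_{0,1} = -\tfrac12\int_M \ts R^2\,\diff{V}_g$. Reading off the gradient yields
\[
\ts V = \mathrm{grad}\,\mathcal{F}_{0,1} = -\tfrac12\,\mathrm{grad}\!\left(\int_M \ts R^2\,\diff{V}_g\right),
\]
which is the first claimed identity.

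For $\ts U$, I would set $\alpha = 1$ and $\beta = 0$, so that $\mathrm{grad}\,\mathcal{F}_{1,0} = \ts U$. Now the Weyl term carries coefficient $-\alpha = -1$ and the scalar term carries $(\alpha/3 - \beta)/2 = \tfrac16$, giving $\mathcal{F}_{1,0} = -\int_M\big(|\ts W|^2 - \tfrac16 \ts R^2\big)\,\diff{V}_g$, and therefore
\[
\ts U = -\,\mathrm{grad}\!\left(\int_M \big(|\ts W|^2 - \tfrac16 \ts R^2\big)\,\diff{V}_g\right).
\]
The only thing to watch here is the bookkeeping of the two coefficients, and there is no analytic obstacle at all; indeed, as a consistency check one may recombine the two displayed functionals with weights $\alpha$ and $\beta$ and verify that the result is exactly $\mathcal{F}_{\alpha,\beta}$, confirming that the coordinate specializations are compatible with the general gradient formula of the lemma.
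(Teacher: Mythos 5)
Your proposal is correct and is exactly the intended argument: the paper states Corollary~\ref{cor:U-V-variations} without proof precisely because it follows by specializing Lemma~\ref{lem:var-origin} to $(\alpha,\beta)=(0,1)$ and $(1,0)$, which is what you do, and your coefficient bookkeeping ($-\tfrac12$ for the scalar term, $-1$ and $+\tfrac16$ in the Weyl case) checks out against the definition of $\mathcal{F}_{\alpha,\beta}$.
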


    \begin{proposition}[Second variation of $\int \ts R^2$ in TT gauge]\label{prop:second-var-R2}
    Let $(M^4,g)$ be closed. For a TT variation $g_t=g+th+o(t)$, the second variation of $F(g):=\int_M \ts R^2\, \diff{V}_g$ is
    \[
    \delta^2 F[h,h]
    = \int_M \Big[
    -\,\ts R\,|\nabla h|^2
    -2\ts R\,\ts R_{ikjl}\,h_{ij}h_{kl}
    +2\ts R\,\ts R_{ik}\,h_{ij}h_{kj}
    +2\langle \Rc,h\rangle^2
    +\Big(\frac12\ts R^2-2\,\Delta \ts R\Big)|h|^2
    \Big]\, \diff{V}.
    \]
    \end{proposition}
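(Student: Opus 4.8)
The plan is to Taylor–expand $F(g_t)=\int_M \ts R_{g_t}^2\,\dvol_{g_t}$ to second order along the affine path $g_t=g+th$ and read off the coefficient of $t^2$; since $M$ is closed, every integration by parts below is free of boundary terms. Writing $\dot{\ts R}$ and $\ddot{\ts R}$ for the first and second variations of scalar curvature and expanding the density as $\dvol_{g_t}=\bigl(1+\tfrac12(\tr h)\,t+(\tfrac18(\tr h)^2-\tfrac14|h|^2)t^2+o(t^2)\bigr)\dvol$, the TT conditions imposed at $t=0$ kill the first variation of the density and the mixed term $2\ts R\dot{\ts R}\,\tr h$, so that
\begin{equation*}
\delta^2F[h,h]=\int_M\Bigl(2\dot{\ts R}^2+2\ts R\,\ddot{\ts R}-\tfrac12\ts R^2|h|^2\Bigr)\dvol .
\end{equation*}
The last term is exactly the contribution of the second variation of the volume form, and the first is handled immediately once $\dot{\ts R}$ is known.

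For the remaining inputs I would use the standard linearizations. The first variation is $\dot{\ts R}=-\Delta(\tr h)+\Div\Div h-\langle\Rc,h\rangle$, which in the TT gauge collapses to $\dot{\ts R}=-\langle\Rc,h\rangle$, immediately giving the term $2\langle\Rc,h\rangle^2$. For $\ddot{\ts R}$ I would differentiate $\ts R_t=g_t^{ij}\Rc(g_t)_{ij}$ twice, using $\tfrac{d}{dt}g_t^{ij}=-h^{ij}$ and $\tfrac{d^2}{dt^2}\big|_0 g_t^{ij}=2(h^2)^{ij}$, to obtain
\begin{equation*}
\ddot{\ts R}=2(h^2)^{ij}\ts R_{ij}-2h^{ij}\,D\Rc(h)_{ij}+g^{ij}\,D^2\Rc(h,h)_{ij},
\end{equation*}
where $D\Rc(h)$ reduces in the TT gauge to $\tfrac12\Delta_L h$ for the Lichnerowicz Laplacian fixed above. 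Multiplying by $2\ts R$ and integrating the rough–Laplacian piece of $\Delta_L$ by parts twice transfers one derivative onto $\ts R$: the principal part produces $-\ts R|\nabla h|^2$, the leftover pairing $\langle\nabla\ts R,\nabla|h|^2\rangle$ integrates to a multiple of $\Delta\ts R\,|h|^2$, and the algebraic pieces of $\Delta_L$ together with $2(h^2)^{ij}\ts R_{ij}$ and $g^{ij}D^2\Rc(h,h)_{ij}$ assemble into the curvature contractions $\ts R\,\ts R_{ikjl}h_{ij}h_{kl}$ and $\ts R\,\ts R_{ik}h_{ij}h_{kj}$ with the stated coefficients.

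The hard part is precisely this bookkeeping of $\ddot{\ts R}$, and two features demand care. First, the TT conditions hold only at $t=0$: along the path both $\tr_{g_t}h$ and $\Div_{g_t}h$ acquire nonzero $t$–derivatives (for instance $\tfrac{d}{dt}\tr_{g_t}h=-|h|^2$), so after a single differentiation the trace and divergence operators can no longer be discarded, and they are what ultimately feed the $\Delta\ts R\,|h|^2$ and $\ts R^2|h|^2$ terms. Second, the term $g^{ij}D^2\Rc(h,h)_{ij}$ and the Hessian of $\ts R$ concealed in the computation generate auxiliary expressions of the shape $\nabla^2\ts R\,(h^2)$ and $\nabla\ts R\cdot\nabla h$; these must be integrated by parts and shown to collapse to a single $\Delta\ts R\,|h|^2$ before one can read off the coefficient $-2$. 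Reconciling the $\ts R^2|h|^2$ contributions coming separately from $\ddot{\ts R}$ and from the volume density is the most delicate sign check.

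As an independent verification I would recompute $\delta^2F[h,h]=\tfrac{d}{dt}\big|_0\int_M\langle\mathrm{grad}\,F(g_t),h\rangle_{g_t}\dvol_{g_t}$ using $\mathrm{grad}\,F=-2\ts V$ from Corollary~\ref{cor:U-V-variations}. Differentiating the pairing gives, in the TT gauge, $\int_M\bigl(-2\langle D\ts V(h),h\rangle+4\ts V_{ij}(h^2)^{ij}\bigr)\dvol$; linearizing the algebraic block $\ts R\,\ts R_{ij}-\tfrac14\ts R^2 g_{ij}$ of $\ts V$ then reproduces the $\langle\Rc,h\rangle^2$, $\ts R\,\ts R_{ik}h_{ij}h_{kj}$, and $\ts R^2|h|^2$ terms, while the $\Delta\ts R\,g_{ij}$ block contributes $-2\Delta\ts R\,|h|^2$ directly. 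Requiring the two routes to agree pins down every coefficient and sign in the final formula.
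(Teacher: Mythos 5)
Your setup is sound and it is a genuinely different route from the paper's: you expand $F(g+th)$ directly in $t$, and your reduction
\begin{equation*}
\delta^2F[h,h]\;=\;\int_M\Big(2\dot{\ts R}^2+2\ts R\,\ddot{\ts R}-\tfrac12\ts R^2|h|^2\Big)\diff{V},
\end{equation*}
together with $\dot{\ts R}=-\langle\Rc,h\rangle$ in TT gauge, is correct and accounts for the $2\langle\Rc,h\rangle^2$ term and part of the $\ts R^2|h|^2$ term. The genuine gap is that every remaining term of the stated formula --- $-\ts R|\nabla h|^2$, $-2\ts R\,\ts R_{ikjl}h_{ij}h_{kl}$, $+2\ts R\,\ts R_{ik}h_{ij}h_{kj}$, $-2\Delta\ts R\,|h|^2$, and the rest of the $\ts R^2|h|^2$ coefficient --- must be extracted from $2\ts R\,\ddot{\ts R}$, and you never compute $\ddot{\ts R}$: no formula for $g^{ij}D^2\Rc(h,h)_{ij}$ is written down, and the statement that the pieces ``assemble into the curvature contractions with the stated coefficients'' is precisely the proposition to be proved, not a derivation of it. The same deferral occurs in your verification route, where linearizing $\ts V$ is said to ``reproduce'' the terms but is not carried out. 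You correctly flag the two delicate points (the TT conditions fail for $t\neq 0$ along the affine path, and the competing $\ts R^2|h|^2$ contributions must be reconciled), but flagging them is not resolving them; as written, the proposal is a roadmap rather than a proof.

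The comparison with the paper makes clear why this gap is the whole ballgame: the paper never touches second variations of curvature. It writes $\delta^2F[h,h]=-2\int_M\langle \ts V'(h),h\rangle\diff{V}$, evaluates $\ts V'(h)$ using only \emph{first-order} TT linearizations of $\ts R$, $\nabla^2\ts R$, $\Delta\ts R$ and $\Rc$, reaches the intermediate identity $\int_M\big(-\ts R\langle\Delta_L h,h\rangle+2\langle\Rc,h\rangle^2+(\tfrac12\ts R^2-2\Delta\ts R)|h|^2\big)\diff{V}$, and finishes with one integration by parts. Your primary route demands strictly more --- the full second variation of the Ricci tensor along the affine path --- which is exactly the step you leave unexecuted. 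One detail you handle more carefully than the paper: along the affine path, differentiating the inverse metrics in the pairing $\langle\ts V,h\rangle_{g_t}$ produces the additional term $4\int_M \ts V_{ij}(h^2)^{ij}\diff{V}$, which does not appear in the paper's reduction to $-2\int_M\langle\ts V'(h),h\rangle\diff{V}$; at a non-critical, non-Einstein background this term is not obviously zero, so completing either of your routes would also require tracking it and reconciling the result with the paper's intermediate formula. Until $\ddot{\ts R}$ (equivalently $D\ts V(h)$ plus that correction term) is actually computed, the coefficients in the proposition remain unverified.
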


    \begin{proof}
    Write the first variation as $\delta F=-2\int \langle \ts V,h\rangle\diff{V}_g$. Differentiating once more and using TT gauge:
    \[
    \delta^2F[h,h]=-2\int_M \langle \ts V'(h),h\rangle \diff{V}_g.
    \]
    Expanding using the standard TT-gauge variations
    \begin{align*}
    \delta \ts R&=-\langle \Rc,h\rangle,\\
    \delta(\Delta \ts R)&=\Delta(\delta \ts R)-\langle h,\nabla^2 \ts R\rangle,\\
    \delta \Rc&=\tfrac12(\nabla^*\nabla h)_{ij}+(\Rm\!*h)_{ij}-(\Rc\circ h + h\circ \Rc)_{ij},
    \end{align*}
    and integrating by parts yields the result.
    \end{proof}

    \begin{remark}
    If $(M^4, g)$ is Einstein, the main contributions to the second variation come from the derivative term $-\ts R\, |\nabla h|^2$ and the curvature terms involving $\ts R_{ikjl}\, h_{ij}\,h_{kl}$.
    \end{remark}

    \begin{proposition}[Second variation at Einstein backgrounds]\label{prop:second-var-full}
    On an Einstein $4$-manifold $(M,g_E)$ with $\Rc=\lambda g_E$ and $\ts R=4\lambda$,
    the second variation of $\mathcal{F}_{\alpha,\beta}(g)$ in TT gauge is
    \[
    \delta^2\mathcal{F}_{\alpha,\beta}[h,h]
    = \int_M \!\Big\langle
    \big(
    \alpha\,\Delta_L^2
    + \frac12 \ts R\,(\beta-\tfrac{4}{3}\alpha)\,\Delta_L
    + \ts R^2(\tfrac{5}{36}\alpha - \tfrac{1}{4}\beta)
    \big)h,\; h
    \Big\rangle \diff{V}.
    \]
    Equivalently, for each TT eigenmode $\Delta_L h_k=\mu_k h_k$,
    \[
    \delta^2\mathcal{F}_{\alpha,\beta}[h,h]
    = \sum_k p_{\alpha,\beta}(\mu_k)\,\|h_k\|_{L^2}^2,
    \quad
    p_{\alpha,\beta}(\mu)
    = \alpha\mu^2
    + \frac12 \ts R\,(\beta-\tfrac{4}{3}\alpha)\mu
    + \ts R^2(\tfrac{5}{36}\alpha - \tfrac{1}{4}\beta).
    \]
    \end{proposition}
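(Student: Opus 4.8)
The plan is to exploit that every closed Einstein $4$-manifold $(M^4,g_E)$ is a critical point of $\mathcal{F}_{\alpha,\beta}$. Both basis tensors vanish on $g_E$: $\ts V=0$ since the scalar curvature is constant (exactly the converse computation in Theorem~\ref{theorem.vanishingBasisTensors}(i)), and $\ts U=0$ because Einstein metrics are Bach-flat, so $\ts B=\tfrac12\ts U+\tfrac16\ts V=0$ forces $\ts U=0$. Hence $\mathrm{grad}\,\mathcal{F}_{\alpha,\beta}=\alpha\ts U+\beta\ts V$ vanishes on $g_E$ by Lemma~\ref{lem:var-origin}, and at a critical point the Hessian equals the quadratic form of the linearized gradient. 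Writing $\ts U'$ and $\ts V'$ for the linearizations of $\ts U,\ts V$ at $g_E$ restricted to TT tensors, I would first establish
\[
\delta^2\mathcal{F}_{\alpha,\beta}[h,h]=\int_M\big\langle(\alpha\,\ts U'+\beta\,\ts V')(h),\,h\big\rangle\diff{V}.
\]
By linearity the whole problem reduces to identifying $\ts U'$ and $\ts V'$ as polynomials in the Lichnerowicz Laplacian $\Delta_L$; the integral operator is then $\alpha\Delta_L^2+\tfrac12\ts R(\beta-\tfrac43\alpha)\Delta_L+\ts R^2(\tfrac{5}{36}\alpha-\tfrac14\beta)$, and the eigenmode statement is obtained by diagonalization.

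Next I would record the Einstein simplifications in the TT gauge, which collapse most of the fourth-order structure. Since $\delta \ts R=-\langle\Rc,h\rangle=-\lambda\,\tr h=0$ (as in the proof of Proposition~\ref{prop:second-var-R2}) and $\ts R=4\lambda$ is constant, we have $\nabla \ts R=0$ and $\nabla^2 \ts R=0$, while the linearized Ricci tensor reduces to the standard TT expression $\delta\Rc=\tfrac12\Delta_L h$. The $\ts V'$ piece stays entirely within the paper: evaluating Proposition~\ref{prop:second-var-R2} on $g_E$, the term $\ts R_{ikjl}h_{ij}h_{kl}$ and the Ricci term cancel once $\int|\nabla h|^2$ is rewritten through the defining identity $(\nabla^*\nabla h)_{ij}=(\Delta_L h)_{ij}-2\ts R_{ikjl}h_{kl}+2\lambda h_{ij}$, leaving $\delta^2\!\int \ts R^2=\int\langle(-\ts R\,\Delta_L+\tfrac12\ts R^2)h,h\rangle\diff{V}$. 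Combined with $\ts V=-\tfrac12\,\mathrm{grad}\!\int \ts R^2$ from Corollary~\ref{cor:U-V-variations}, this gives
\[
\ts V'(h)=\tfrac12\ts R\,\Delta_L h-\tfrac14\ts R^2\,h,
\]
whose associated integral operator is exactly the $\beta$-part $\tfrac12\ts R\Delta_L-\tfrac14\ts R^2$ of the target.

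The main obstacle is $\ts U'$, which is genuinely fourth order. I would linearize \eqref{eq.utensor} term by term on $g_E$ in the TT gauge. The Einstein relations kill most terms: $\delta \ts R=0$ and $\nabla \ts R=0$ annihilate the $\Delta \ts R\,g_{ij}$ term and the derivative part of the $\ts R^2$ term, while $\delta(\ts R\,\ts R_{ij})=\ts R\,\delta\Rc$ and $\delta(\tfrac12|\Rc|^2 g_{ij})$ contribute only second-order curvature terms. The one genuinely fourth-order contribution is $\delta(\Delta \ts R_{ij})$: expanding $\Delta \ts R_{ij}=g^{kl}\nabla_k\nabla_l \ts R_{ij}$ and using $\ts R_{ij}=\lambda g_{ij}$, $\nabla \ts R_{ij}=0$, every term in which a varied Christoffel symbol meets $\nabla\Rc$ drops, leaving $\Delta(\delta\Rc)=\tfrac12\Delta\,\Delta_L h$ together with algebraic corrections proportional to $\lambda$. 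Converting the rough Laplacian to $\Delta_L$ by the defining identity produces a leading fourth-order term proportional to $\Delta_L^2 h$, and feeding $\delta\ts R_{ipjq}$ into $2\ts R_{ipjq}\ts R^{pq}$ supplies the remaining curvature contractions; on an Einstein $4$-manifold all of these are expressible through $\Delta_L$, $\ts R_{ij}=\lambda g_{ij}$ and $\ts R=4\lambda$. Carrying every correction through so that the coefficients emerge exactly is the delicate step; I expect
\[
\ts U'(h)=\Delta_L^2 h-\tfrac23\ts R\,\Delta_L h+\tfrac{5}{36}\ts R^2\,h.
\]
Via the rearrangement $\ts U=2\ts B-\tfrac13\ts V$ of \eqref{eq.bachliketensor} this is equivalent to the linearized Bach operator at $g_E$ being $\tfrac12(\Delta_L-\tfrac{\ts R}{3})(\Delta_L-\tfrac{\ts R}{6})$ on TT tensors, which one may independently verify from the Gauss–Bonnet reduction $\int|\ts W|^2=32\pi^2\chi(M)+\int(2|\Rc|^2-\tfrac23\ts R^2)\diff{V}$ combined with $\ts U=-\mathrm{grad}\!\int(|\ts W|^2-\tfrac16\ts R^2)$ from Corollary~\ref{cor:U-V-variations}.

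Finally I would combine the pieces. Substituting $\ts U'$ and $\ts V'$ gives $\alpha\ts U'+\beta\ts V'=\alpha\Delta_L^2+\tfrac12\ts R(\beta-\tfrac43\alpha)\Delta_L+\ts R^2(\tfrac{5}{36}\alpha-\tfrac14\beta)=:P$, which is the operator in the first display. Since $(M^4,g_E)$ is closed, $\Delta_L$ is elliptic, formally self-adjoint, and preserves the space of TT tensors, so the latter admits an $L^2$-orthonormal eigenbasis $\{h_k\}$ with $\Delta_L h_k=\mu_k h_k$. As $P$ is a polynomial in $\Delta_L$ we have $P h_k=p_{\alpha,\beta}(\mu_k)h_k$ with $p_{\alpha,\beta}(\mu)=\alpha\mu^2+\tfrac12\ts R(\beta-\tfrac43\alpha)\mu+\ts R^2(\tfrac{5}{36}\alpha-\tfrac14\beta)$; expanding $h=\sum_k h_k$ and using orthogonality to kill the cross terms yields $\delta^2\mathcal{F}_{\alpha,\beta}[h,h]=\sum_k p_{\alpha,\beta}(\mu_k)\,\|h_k\|_{L^2}^2$.
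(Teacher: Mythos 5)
Your overall strategy is sound and it reaches the stated result by a route that genuinely differs from the paper's. The paper expands $\delta^2\mathcal{F}_{\alpha,\beta}$ through the Hessians of $\int|\ts W|^2$ and $\int \ts R^2$ and simply quotes the ``standard linearizations'' $\ts B'(h)=\tfrac12\big(\Delta_L-\tfrac{\ts R}{3}\big)\big(\Delta_L-\tfrac{\ts R}{6}\big)h$ and $\ts V'(h)=\tfrac12\ts R\,\Delta_L h$; you instead use the fact that $g_E$ is a critical point (since $\ts V=0$ by constancy of $\ts R$, and $\ts U=2\ts B-\tfrac13\ts V=0$ by Bach-flatness of Einstein metrics), so that the Hessian is the quadratic form of the linearized gradient $\alpha\,\ts U'+\beta\,\ts V'$, and you derive $\ts V'$ self-containedly by evaluating Proposition~\ref{prop:second-var-R2} at $g_E$. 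That derivation is correct: rewriting $-\ts R|\nabla h|^2$ via $\nabla^*\nabla h=\Delta_L h-2\ts R_{ikjl}h_{kl}+2\lambda h$ cancels the curvature contraction and yields $\delta^2\!\int\ts R^2=\int\langle(-\ts R\,\Delta_L+\tfrac12\ts R^2)h,h\rangle\diff{V}$, hence $\ts V'(h)=\tfrac12\ts R\,\Delta_L h-\tfrac14\ts R^2h$. This buys you something real: the zeroth-order term $-\tfrac14\ts R^2 h$ is exactly what produces the $-\tfrac14\beta\,\ts R^2$ piece of $p_{\alpha,\beta}$, whereas the paper's own proof, which drops that term from $\ts V'$, is not internally consistent with the stated constant $(\tfrac{5}{36}\alpha-\tfrac14\beta)\ts R^2$ (it would give $\tfrac{1}{18}\alpha\,\ts R^2$). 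Your version is the one that matches the proposition. The concluding diagonalization over an $L^2$-orthonormal eigenbasis of $\Delta_L$ on TT tensors is also fine.

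The one genuine soft spot is $\ts U'$: you write ``I expect $\ts U'(h)=\Delta_L^2h-\tfrac23\ts R\,\Delta_L h+\tfrac{5}{36}\ts R^2h$'' without carrying out either of your two proposed verifications (direct term-by-term linearization of \eqref{eq.utensor}, or the Gauss--Bonnet reduction of $\int|\ts W|^2$ to $32\pi^2\chi(M)+\int(2|\Rc|^2-\tfrac23\ts R^2)\diff{V}$). This is the only genuinely fourth-order input, so a complete proof must pin it down --- though, to be fair, the paper does no better, citing $\ts B'$ as standard. Note also that your formula is forced once one accepts the same external input as the paper: with your (correct) $\ts V'$ and the quoted $\ts B'$, the identity $\ts U=2\ts B-\tfrac13\ts V$ gives $\ts U'=2\ts B'-\tfrac13\ts V'=\Delta_L^2-\tfrac23\ts R\,\Delta_L+\tfrac{5}{36}\ts R^2$, so your three operators are mutually consistent and consistent with the proposition. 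If you carry out the Gauss--Bonnet route --- which avoids the Weyl tensor entirely and needs only the classical Hessians of $\int|\Rc|^2$ and $\int\ts R^2$ at an Einstein metric --- your argument becomes complete and, arguably, more self-contained than the paper's.
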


    \begin{proof}
    In TT gauge at an Einstein metric:
    \[
    \ts B'(h)=\tfrac12\big(\Delta_L-\tfrac{\ts R}{3}\big)\big(\Delta_L-\tfrac{\ts R}{6}\big)h,\qquad
    \ts V'(h)=\frac12 \ts R\,\Delta_L h.
    \]
    The claimed polynomial follows by expanding
    \[
    \delta^2\mathcal{F}_{\alpha,\beta}[h,h]
    = -2\alpha \!\int_M \langle \ts B'(h),h\rangle \diff{V}_g
    \;-\; (\alpha/3-\beta)\!\int_M \langle \ts V'(h),h\rangle \diff{V}_g. \qedhere
    \]
    \end{proof}

    \begin{corollary}[Bach line and factorization]\label{cor:bach-line}
    On the Bach line $\beta=\alpha/3$:
    \[
    \delta^2\mathcal{F}_{\alpha,\beta}[h,h]
    =\alpha \int_M \left\langle
    \big(\Delta_L-\tfrac{\ts R}{3}\big)\big(\Delta_L-\tfrac{\ts R}{6}\big)h,\; h
    \right\rangle \diff{V}_g.
    \]
    \end{corollary}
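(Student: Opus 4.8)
The plan is to specialize the second-variation polynomial of Proposition~\ref{prop:second-var-full} to the Bach line $\beta=\alpha/3$ and to recognize the resulting quadratic in $\Delta_L$ as the product of two shifted Lichnerowicz factors. Because the statement is an immediate algebraic consequence of the operator
\[
\alpha\,\Delta_L^2+\tfrac12\ts R\,(\beta-\tfrac43\alpha)\,\Delta_L+\ts R^2\bigl(\tfrac{5}{36}\alpha-\tfrac14\beta\bigr)
\]
already computed there, the corollary reduces to a coefficient substitution followed by a factorization, rather than any new geometric argument.

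First I would substitute $\beta=\alpha/3$ into the three coefficients. The leading coefficient stays $\alpha$. The linear coefficient becomes
\[
\tfrac12\ts R\!\left(\tfrac{\alpha}{3}-\tfrac{4\alpha}{3}\right)=-\tfrac12\alpha\ts R,
\]
and the zeroth-order coefficient becomes
\[
\ts R^2\!\left(\tfrac{5}{36}\alpha-\tfrac{1}{12}\alpha\right)=\tfrac{1}{18}\alpha\ts R^2 .
\]
Thus on the Bach line the second-variation operator collapses to $\alpha\bigl(\Delta_L^2-\tfrac12\ts R\,\Delta_L+\tfrac{1}{18}\ts R^2\bigr)$.

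Next I would factor the bracketed quadratic. Treating $\Delta_L$ as a formal variable $\mu$ — which is legitimate here because $\ts R$ is \emph{constant} on an Einstein background, so $\ts R$ and $\Delta_L$ commute and any constant-coefficient polynomial in $\Delta_L$ factors exactly as the corresponding scalar polynomial — I need the identity
\[
\mu^2-\tfrac12\ts R\,\mu+\tfrac{1}{18}\ts R^2=\Bigl(\mu-\tfrac{\ts R}{3}\Bigr)\Bigl(\mu-\tfrac{\ts R}{6}\Bigr).
\]
This is verified by expanding the right-hand side, whose linear coefficient is $-\bigl(\tfrac{\ts R}{3}+\tfrac{\ts R}{6}\bigr)=-\tfrac12\ts R$ and whose constant term is $\tfrac{\ts R}{3}\cdot\tfrac{\ts R}{6}=\tfrac{1}{18}\ts R^2$; equivalently, the quadratic formula gives discriminant $\tfrac14\ts R^2-\tfrac{4}{18}\ts R^2=\tfrac{1}{36}\ts R^2$ and hence roots $\tfrac{\ts R}{3}$ and $\tfrac{\ts R}{6}$. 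Reinstating the factor $\alpha$ and reading $\mu$ back as the self-adjoint operator $\Delta_L$ yields $\alpha\bigl(\Delta_L-\tfrac{\ts R}{3}\bigr)\bigl(\Delta_L-\tfrac{\ts R}{6}\bigr)$, and pairing with $h$ in $L^2$ produces the displayed quadratic form.

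There is no substantial obstacle here; the argument is essentially bookkeeping. The only point that warrants care is the passage from the scalar factorization to the operator factorization, which is justified precisely because the Einstein condition makes $\ts R$ a constant that commutes with $\Delta_L$ — off the Einstein locus this step would fail and the two shifted factors need not commute. One may additionally note, consistent with the spectral interpretation via the eigenmodes $\Delta_L h_k=\mu_k h_k$ of Proposition~\ref{prop:second-var-full}, that on a shrinking Einstein soliton $\ts R=4\lambda>0$, so both shifts $\tfrac{\ts R}{3}$ and $\tfrac{\ts R}{6}$ are positive.
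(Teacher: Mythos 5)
Your proposal is correct and is essentially the paper's (implicit) argument: the corollary is stated as an immediate consequence of Proposition~\ref{prop:second-var-full}, obtained precisely by substituting $\beta=\alpha/3$ into the polynomial coefficients and factoring $\mu^2-\tfrac12\ts R\,\mu+\tfrac{1}{18}\ts R^2=\bigl(\mu-\tfrac{\ts R}{3}\bigr)\bigl(\mu-\tfrac{\ts R}{6}\bigr)$, which matches your computation exactly. Your added remark that the operator factorization is licit because $\ts R$ is constant on an Einstein background (equivalently, one could note that on the Bach line the $\ts R^2$ term of $\mathcal{F}_{\alpha,\beta}$ drops out and the factorization is already built into the linearization $\ts B'(h)=\tfrac12\bigl(\Delta_L-\tfrac{\ts R}{3}\bigr)\bigl(\Delta_L-\tfrac{\ts R}{6}\bigr)h$ used in the proposition's proof) is a sound and welcome precision.
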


    \begin{corollary}[Local rigidity by spectral positivity]\label{cor:local-rigidity}
    Let $(M^4,g_E)$ be Einstein with TT-spectrum of $\Delta_L$ contained in $[\mu_0,\infty)$. If $p_{\alpha,\beta}(\mu)>0$ for every TT-eigenvalue $\mu$, then $g_E$ is a strict local minimizer of $\mathcal{F}_{\alpha,\beta}$ modulo diffeomorphisms and scaling. In particular, there exists a neighborhood of $g_E$ in which $\alpha\, \ts U+\beta\, \ts V=0$ has no nontrivial solutions other than metrics isometric to $g_E$.
    \end{corollary}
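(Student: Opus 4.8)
The plan is to deduce local rigidity directly from the spectral positivity of the Hessian polynomial $p_{\alpha,\beta}(\mu)$, using the second–variation formula established in Proposition \ref{prop:second-var-full}. First I would record that, by that proposition, the second variation decomposes over the TT–eigenmodes of the Lichnerowicz Laplacian as
\[
\delta^2\mathcal{F}_{\alpha,\beta}[h,h]=\sum_k p_{\alpha,\beta}(\mu_k)\,\|h_k\|_{L^2}^2,
\]
where each $\mu_k\ge\mu_0$. If $p_{\alpha,\beta}(\mu)>0$ for all TT–eigenvalues, then setting $\delta:=\inf_{\mu\ge\mu_0}p_{\alpha,\beta}(\mu)>0$ (the infimum of a quadratic over a half-line bounded below is attained or approached, and is positive by hypothesis), we obtain the uniform coercivity estimate $\delta^2\mathcal{F}_{\alpha,\beta}[h,h]\ge \delta\,\|h\|_{L^2}^2$ for every TT variation $h$. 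This says that $g_E$ is a nondegenerate critical point of $\mathcal{F}_{\alpha,\beta}$ whose Hessian is positive definite on the slice transverse to the orbit of diffeomorphisms and scaling.

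Next I would promote this infinitesimal positivity to a genuine local statement. Since $g_E$ is a critical point of $\mathcal{F}_{\alpha,\beta}$ (its $L^2$–gradient $\alpha\,\ts U+\beta\,\ts V$ vanishes at an Einstein metric by Theorem \ref{theorem.vanishingBachLike}), and since the Hessian is strictly positive on the TT slice, a standard Morse-theoretic/implicit-function argument on the space of metrics modulo the action of $\mathrm{Diff}(M)\times\mathbb{R}_{>0}$ yields that $g_E$ is a strict local minimizer of $\mathcal{F}_{\alpha,\beta}$ in that quotient. Concretely, one fixes the Ebin slice through $g_E$ consisting of TT perturbations, applies coercivity plus control of the cubic-and-higher remainder in the Taylor expansion of $\mathcal{F}_{\alpha,\beta}$ along the slice, and concludes that $\mathcal{F}_{\alpha,\beta}(g)>\mathcal{F}_{\alpha,\beta}(g_E)$ for all nearby $g$ not isometric (up to scale) to $g_E$.

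Finally, the statement about the equation $\alpha\,\ts U+\beta\,\ts V=0$ follows because any solution in a small neighborhood is a critical point of $\mathcal{F}_{\alpha,\beta}$ lying in the slice; strict local minimality forces any such critical point to coincide with $g_E$ modulo diffeomorphism and scaling, so no nontrivial solutions persist. The main obstacle I anticipate is the passage from the quadratic-form positivity to the honest local-minimizer conclusion: one must handle the degeneracy coming from the diffeomorphism and scaling directions (where the Hessian is automatically zero) by working correctly in the slice, and one must verify that the higher-order remainder in the functional's expansion is genuinely dominated by the coercive quadratic term on that slice. This requires a Palais–Ebin slice decomposition together with elliptic estimates controlling the nonlinear terms; the spectral input from Proposition \ref{prop:second-var-full} is what makes the quadratic term coercive, but the remainder control is the analytically delicate step.
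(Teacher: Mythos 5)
The paper states Corollary \ref{cor:local-rigidity} without proof, so your reconstruction can only be measured against the intended argument, namely the spectral decomposition of Proposition \ref{prop:second-var-full} plus a slice argument; that is indeed the route you take. However, there is a genuine gap at the step where you ``fix the Ebin slice through $g_E$ consisting of TT perturbations.'' The Ebin slice is the space of \emph{divergence-free} symmetric $2$-tensors, and this is strictly larger than $\{\mathrm{TT}\}\oplus\mathbb{R}\,g$: modulo the diffeomorphism directions $L_X g$ it also contains conformal-type directions, i.e.\ projections of $\varphi\,g$ with $\varphi$ non-constant (on an Einstein manifold other than the round sphere such $\varphi\,g$ is never of the form $L_Xg$, so these directions survive in the quotient). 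On those directions the TT hypothesis says nothing. Worse, one can compute exactly what happens there: $\int|\ts W|^2\dvol$ is both diffeomorphism- and conformally invariant in dimension $4$, so its Hessian vanishes on these directions, and the Hessian of $\mathcal{F}_{\alpha,\beta}$ reduces to $\tfrac{\alpha/3-\beta}{2}$ times the Hessian of $\int \ts R^2\dvol$. Writing $\tilde g=e^{2u}g$ one has, in dimension $4$,
\[
\int \tilde{\ts R}^2\,\mathrm{d}\tilde V=\int\big(\ts R-6\Delta u-6|\nabla u|^2\big)^2\dvol,
\]
whose second variation at an Einstein metric is a positive multiple of $\int\big(3(\Delta u)^2-\ts R\,|\nabla u|^2\big)\dvol\ge 0$, by Lichnerowicz--Obata ($\lambda_1\ge \ts R/3$). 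Hence for $\beta>\alpha/3$ the conformal-type slice directions have \emph{strictly negative} second variation (away from the round sphere), no matter what the TT spectrum of $\Delta_L$ does. So positivity of $p_{\alpha,\beta}$ on TT modes alone cannot yield ``strict local minimizer modulo diffeomorphisms and scaling''; your argument (and, frankly, the corollary as stated) needs either the additional hypothesis $\beta\le\alpha/3$ with an Obata-type treatment of the sphere, or the conclusion must be weakened to minimality along TT directions only.

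A second, more technical gap: your coercivity estimate $\delta^2\mathcal{F}_{\alpha,\beta}[h,h]\ge\delta\|h\|_{L^2}^2$ is in the wrong norm to absorb the Taylor remainder of a fourth-order functional, since the cubic terms involve up to two derivatives of $h$ and cannot be dominated by $\|h\|_{L^2}^2$ however small $h$ is. What saves this step (when $\alpha>0$) is that $p_{\alpha,\beta}(\mu)\ge\delta$ together with $p_{\alpha,\beta}(\mu)\sim\alpha\mu^2$ gives $p_{\alpha,\beta}(\mu)\ge c(1+\mu^2)$ on the TT spectrum, i.e.\ genuine $H^2$-coercivity, after which Sobolev embedding in dimension $4$ controls the remainder; you flag the remainder as the delicate point but the $L^2$ bound you actually wrote down is insufficient to run it. Finally, in the last step you should note why a critical point of $\mathcal{F}_{\alpha,\beta}$ \emph{restricted to the slice} is a genuine solution of $\alpha\,\ts U+\beta\,\ts V=0$: this uses that $\ts U$ and $\ts V$ are divergence-free, so the $L^2$-gradient is automatically orthogonal to the diffeomorphism directions and no Lagrange multiplier appears; isolation then follows from invertibility of the Hessian as an elliptic operator on the slice, not merely positivity of the quadratic form.
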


    \begin{remark}[Flat/Gaussian background]
    If $\ts R=0=\ts W$, then $\ts B'(h)=\tfrac12\Delta_L^2 h$ and $\ts V'(h)\equiv 0$ on TT. Hence
    \[
    \delta^2\mathcal{F}_{\alpha,\beta}[h,h]=\alpha \int_M |\Delta h|^2\,\diff{V},
    \]
    which is strictly positive for $\alpha>0$ on compactly supported variations.
    \end{remark}

\printbibliography

@article{bach:1921,
    author = {Rudolf Bach},
    title = {Zur Weylschen Relativitätstheorie und der Weylschen Erweiterung des Krümmu\-ngstensorbegriffs},
    year={1921},
    journal={Mathematische Zeitschrift},
    number={9},
    pages={110},
    URL={http://resolver.sub.uni-goettingen.de/purl?PPN266833020_0009}
}

@book{besse:1987,
    author = {Arthur L. Besse},
    title={Einstein Manifolds},
    publisher={Springer Berlin, Heidelberg},
    year={1987},
    isbn={978-3-540-74120-6},
    doi={https://doi.org/10.1007/978-3-540-74311-8}
}

@article{caozhou:2010,
author = {Huai-Dong Cao and Detang Zhou},
title = {{On complete gradient shrinking Ricci solitons}},
volume = {85},
journal = {Journal of Differential Geometry},
number = {2},
publisher = {Lehigh University},
pages = {175 -- 186},
year = {2010},
doi = {10.4310/jdg/1287580963},
URL = {https://doi.org/10.4310/jdg/1287580963}
}

@article{caochen:2013,
author = {Huai-Dong Cao and Qiang Chen},
title = {{On Bach-flat gradient shrinking Ricci solitons}},
volume = {162},
journal = {Duke Mathematical Journal},
number = {6},
publisher = {Duke University Press},
pages = {1149 -- 1169},
year = {2013},
doi = {10.1215/00127094-2147649},
URL = {https://doi.org/10.1215/00127094-2147649}
}

@article{hamilton:1988,
  title={The Ricci Flow on Surfaces},
  author={Richard S. Hamilton},
  journal={Contemporary Mathematics},
  volume={71},
  pages={237--261},
  year={1988}
}

@article{bergman:2004,
    author = {Jonas Bergman Ärlebäck},
    year = {2004},
    month = {01},
    pages = {},
    title = {Conformal Einstein spaces and Bach tensor generalizations in},
    isbn = {9185295280},
    journal = {Science}
}

@article{munteanusesum:2013,
title = {On Gradient Ricci Solitons},
journal = {Journal of Geometric Analysis},
volume = {23},
pages = {539-561},
year = {2013},
url = {https://doi.org/10.1007/s12220-011-9252-6},
doi = {10.1007/s12220-011-9252-6},
author = {Munteanu, Ovidiu and Sesum, Natasa}
}

@article{pigolarimoldi:2011,
    title={Remarks on Non-Compact Gradient Ricci Solitons},
    year = {2011},
    journal = {Mathematische Zeitschrift},
    pages = {777-790},
    volume = {268},
    url = {https://doi.org/10.1007/s00209-010-0695-4},
    doi = {10.1007/s00209-010-0695-4},
    author = {Pigola, Stefano and Rimoldi, Michele and Setti, Alberto G.}
}
\end{document}